\newtheorem{theorem}{Theorem}
\newtheorem{lemma}{Lemma}
\newtheorem{corollary}{Corollary}
\newtheorem{proposition}{Proposition}
\theoremstyle{remark}
\newtheorem{remark}{Remark}
\newcommand{\Our}{ADP-$\beta$-MAID}
\theoremstyle{remark}
\numberwithin{equation}{section}
\newcommand{\R}{\mathbb{R}}
\newcommand{\norm}[1]{\left\Vert #1 \right\Vert}
\renewcommand{\d}{\,\mathrm{d}}
\newcommand{\calL}{\mathcal{L}}
\newcommand{\reg}{{\mathcal J}}
\newcommand{\xdagger}{{x^\dagger}}
\newcommand{\xhat}{{\hat x}}
\newcommand{\RI}{{\R \cup \{+\infty\}}}
\newcommand{\grad}{\nabla}
\newcommand{\wsto}{\stackrel{*}{\rightharpoonup}}
\newcommand\restr[2]{{\left.\kern-\nulldelimiterspace #1 \vphantom{\big|} \right|_{#2}}}
\let\sp\relax
\newcommand{\sp}[1]{\left\langle #1 \right\rangle}
\renewcommand{\phi}{\varphi}
\newcommand{\lsc}{lower-semicontinuous}
\newcommand{\ssubset}{\subset\joinrel\subset}
\begin{document}
\title{Fast Inexact Bilevel Optimization for Analytical Deep Image Priors}

\author{
    Mohammad Sadegh Salehi\thanks{Department of Mathematical Sciences, University of Bath, Bath BA2 7AY, UK. Email: \url{{ mss226, ymk30 }@bath.ac.uk}.}
    \and  
    Tatiana A. Bubba\thanks{Department of Mathematics and Computer Science, University of Ferrara, Italy.  
    Email: \url{tatiana.bubba@unife.it}.}  
    \and  
    Yury Korolev\footnotemark[1]
}
\date{}
\maketitle       
\begin{abstract}
The analytical deep image prior (ADP) introduced by Dittmer et al.~(2020) establishes a link between deep image priors and classical regularization theory via bilevel optimization. While this is an elegant construction, it  involves expensive computations if the lower-level problem is to be solved accurately. To overcome this issue, we propose  to use adaptive inexact bilevel optimization to solve ADP problems. We discuss an extension of a recent inexact bilevel method called the method of adaptive inexact descent  of Salehi et al.~(2024) to an infinite-dimensional setting required by the ADP framework. In our numerical experiments we demonstrate that the computational speed-up achieved by adaptive inexact bilevel optimization allows one to use ADP on larger-scale problems than in the previous literature, e.g. in deblurring of 2D color images.
\vspace{0.5cm}

\noindent \textbf{Keywords:} Data-driven regularization, regularization by architecture, inexact bilevel optimization, semi-blind deblurring.
\end{abstract}
%%%%%%%%%%%%%%%%%%%%%%%%%%%%%
\section{Introduction}
The Deep Image Prior (DIP) was introduced in~\cite{ulyanov2018deep} as an unsupervized learning technique for image processing tasks such as denoising, super-resolution, and inpainting. The regularity of the reconstruction is achieved by a combination of a network architecture and early stopping during training. This method was later called \textit{regularization by architecture} in~\cite{dittmer2020regularization}. The analysis of the DIP approach has received a fair deal of interest in the literature: e.g., \cite{baguer2020computed}~interpreted it as the optimization of an analytical prior such as Total Variation; \cite{buskulic2024convergence} studied convergence of the method and derived error bounds; a Bayesian viewpoint was taken in \cite{cheng2019bayesian}; and reconstruction capabilities were analyzed in \cite{jagatap2019algorithmic}.  

Using the similarity to classical regularization methods such as Landweber iteration, \cite{dittmer2020regularization} proposed a bilevel optimization formulation of the DIP approach for an unrolled proximal gradient architecture (we postpone the precise formulation to \cref{sec:prob-stat-well-posedness}). This reformulation---albeit restricted to a particular architecture---allowed  carrying out a theoretical analysis in the spirit of regularization theory. This formulation was called the \emph{analytic deep prior} (ADP)~\cite{dittmer2020regularization}.
The ADP was further analyzed in~\cite{arndt2022regularization}, where  equivalence to classical Ivanov regularization was established. Furthermore, it was emphasized that early stopping plays a crucial role in the original DIP approach, which, if applied to the ADP, would break the equivalence with Ivanov regularization. To mimic this effect,~\cite{arndt2022regularization} proposed a Sobolev-regularized version of the ADP, cf.~\eqref{eq:ADP-Sobolev}.

Although the bilivel formulation of the ADP has theoretical advantages, it is challenging to solve numerically. Two  main approaches have been used, one based on the implicit function theorem (IFT) and another based on algorithm unrolling~\cite{adler2017solving,gregor2010learning}. While the IFT approach has convergence guarantees, in practice it requires a large number of iterations of the lower-level problem to achieve the desired accuracy, which have to be repeated at every iteration of the upper-level problem. Unrolling is faster, but lacks convergence guarantees. These alternatives were tested within the ADP framework in both~\cite{arndt2022regularization} and~\cite{dittmer2020regularization}, albeit only for simple 1D signals.

Recent advances in bilevel optimization, however, have produced methods that do not require solving the lower-level problem to high accuracy and can tolerate a controlled error in the lower-level solution while retaining convergence guarantees. These methods present a computationally efficient alternative to the IFT approach for the ADP problem. 
In this work, we propose to use the Method of Adaptive
Inexact Descent (MAID), recently introduced in~\cite{salehi2024adaptivelyinexactfirstordermethod}, for the Sobolev-regularized ADP problem~\eqref{eq:ADP-Sobolev}. On the one hand, MAID does not require solving the lower-level problem exactly and thus avoids prohibitively expensive numbers of iterations in the lower-level problem. On the other hand, it comes with convergence guarantees. The considerable computational speed-up offered by inexact bilevel optimization opens up the possibility to test ADP on more challenging problems, such as deblurring of 2D color images.

The remainder of this paper is organized as follows. In Section~\ref{sec:prob-stat-well-posedness}, we formulate the problem  and provide a summary of existing results. In \cref{sec:InexactBilevelOpt}, we recap the inexact bilevel algorithm from~\cite{salehi2024adaptivelyinexactfirstordermethod} and extend the convergence proof from the Euclidean setting to that of separable Hilbert spaces. We discuss the application of MAID to ADP and its limitations. Next, in Section~\ref{sec:experiments} we compare our results with those in~\cite{arndt2022regularization} on deblurring of 1D signals (demonstrating a significant computational speed-up) and extend our numerical experiments to various deblurring scenarios of 2D color images. 

%%%%%%%%%%%%%%%%%%%%%%%%%%%%%
\section{Problem statement and summary of existing results} \label{sec:prob-stat-well-posedness}
We consider linear inverse problems given by
\begin{equation}\label{eq:Ax=y}
    Ax=y,
\end{equation}
where $A \colon X \to Y$ is an injective forward operator acting between separable Hilbert spaces $X$ and $Y$. The exact right-hand side $y$ is assumed to be unavailable; as it is customary in deterministic regularization methods,  we assume that we have access to $y^\delta \in Y$ such that $\norm{y-y^\delta} \leq \delta$, where $\delta >0$ is the error magnitude. We denote by  $\xdagger$ the solution of the noise-free problem~\eqref{eq:Ax=y}.

The analytic deep prior approach (ADP)~\cite{dittmer2020regularization}  seeks to approximate the solution of~\eqref{eq:Ax=y} by a solution of the following bilevel problem
\begin{subequations}\label{eq:ADP-no-reg}
    \begin{align}
        & \min_{B \in \calL(X,Y)}  \norm{A\hat{x}(B) - y^\delta}^2 \\[-0.35em]
         & {\hat{x}(B)} \coloneqq \arg\min_{x} \norm{Bx-y^\delta}^2 + \alpha \reg(x), 
    \end{align} 
\end{subequations}
where $\reg \colon X \to \RI$ is a regularization functional chosen so that the minimizer in the lower-level problem exists and is unique, and $\alpha > 0$ a regularization parameter. The lack of coercivity of the upper-level problem makes the question of the existence of minimizers non-obvious; however, it was shown in~\cite{arndt2022regularization} that under certain conditions~\eqref{eq:ADP-no-reg} can be reformulated as a classical Ivanov regularization problem (with a modified regularization term), for which existence of solutions can be shown. Under the additional assumption that $\reg(x) = \norm{x}^2$, the authors were also able to prove stability with respect to the data $y^\delta$. The problem was also shown to be equivalent to a Tikhonov problem with a special choice of the regularization parameter.

In practice, however, solutions of~\eqref{eq:ADP-no-reg} are never computed because early stopping is used. For this reason,~\cite{arndt2022regularization} proposed the following modification of ADP 
\begin{subequations}\label{eq:ADP-op-norm}
    \begin{align}
        & \min_{B \in \calL(X,Y)}  \norm{A\hat{x}(B) - y^\delta}^2 + \beta \norm{B-A}_{\calL(X,Y)}^2 \\[-0.35em]
         & {\hat{x}(B)} \coloneqq \arg\min_{x} \norm{Bx-y^\delta}^2 + \alpha \reg(x), 
    \end{align} 
\end{subequations} 
where  $\norm{\cdot}_{\calL(Y)}$ is the operator  norm and $\beta>0$ is a regularization parameter. 
 
First, we make a note on the existence of minimizers.
The existence of a minimizer in the lower-level problem for any bounded $B$ follows from standard arguments under the usual assumptions on the regularizer $\reg$.
The situation with the upper-level problem is more difficult. 
Let $\{B_k\}$  be a minimizing sequence  of the upper-level problem. Because $\calL(X,Y)$ is the dual of a separable space (the space of nuclear operators), by the Banach-Alaoglu theorem there is a weakly-* convergent subsequence $B_k \wsto B$ (which we do not relabel). On bounded sets this convergence coincides with convergence in the weak operator topology, $\sp{B_k x,y} \to \sp{B x,y} \; \forall \, x \in X, \, y \in Y$.
To obtain existence of minimizers via the direct method of calculus of variations, we would require lower-semicontinuity of the term $\norm{A\hat{x}(B) - y^\delta}^2$ with respect to weak operator convergence, which, in turn, requires the continuity of the map $B \mapsto \hat x(B)$. This does not hold without additional assumptions, as was also noted in~\cite{arndt2022regularization}. Roughly speaking, the reason is that the term $\norm{Bx-y^\delta}^2$ is only \lsc{} and not continuous with respect to weak operator convergence.

The following modification was made in~\cite{arndt2022regularization} to ensure existence of minimizers in the upper-level problem.
Let $X=Y =L^2(\Omega)$, where $\Omega \subset \R^n$ is an open bounded Lipschitz domain, and let $A$ be an integral operator with a kernel $a \in L^2(\Omega \times \Omega)$
\begin{equation}\label{eq:B-def}
    A x (t) = \int_\Omega  a(t,\tau) x(\tau) \d \tau =: [T(a,x)](t), 
\end{equation}
where $T$ is a bilinear operator.
By restricting 
oneself to operators with Sobolev kernels $b \in H^1(\Omega \times \Omega)$, one can make use of the the compact embedding $H^1 \ssubset L^2$ (valid for any $n$). 
In~\cite{arndt2022regularization}, the author considers the following bilevel problem
\begin{subequations}\label{eq:ADP-Sobolev}
    \begin{align}
        & \min_{b \in H^1(\Omega \times \Omega)}  \norm{T(a,\hat{x}(b)) - y^\delta}_{L^2}^2 + \beta \norm{b-a}_{H^1}^2 \label{upper_Sobolev}\\[-0.35em]
         & {\hat{x}(b)} \coloneqq \arg\min_{x} \norm{T(b,x)-y^\delta}_{L^2}^2 + \alpha \reg(x),
    \end{align}
\end{subequations}
where $a \in H^1(\Omega \times \Omega)$ is the kernel of the forward operator $A$. This problem was called ADP-$\beta$ in~\cite{arndt2022regularization}. 
Existence, stability, and convergence rates for solutions of this modified problem can be shown with standard arguments, see~\cite[Sec. 3.4]{arndt2022regularization}.

%%%%%%%%%%%%%%%%%%%%%%%%%%%%%%%%%
\section{Inexact bilevel optimization}
\label{sec:InexactBilevelOpt}

In its standard form (i.e., not reformulated as an Ivanov or Tikhonov problem, as suggested in~\cite{arndt2022regularization}), the ADP is a bilevel optimization problem. Research has primarily focused on two main approaches: the use of the implicit function theorem (IFT) and algorithm unrolling~\cite{dittmer2020regularization}. While the IFT approach offers convergence guarantees, in practice it requires a large number of iterations of the lower-level problem to compute the exact solution $\xhat(B)$, as well as its gradient with respect to $B$. These computations must be repeated at every iteration of the upper-level problem. Achieving such high accuracy in solving the lower-level problem can be computationally prohibitive. Moreover, when approximations are used, errors in solving the lower-level problem can propagate to the gradient obtained via IFT, potentially compromising the convergence of the bilevel scheme.
On the other hand, algorithm unrolling uses a predefined number of iterations in the lower-level problem to obtain an approximation of $\xhat(B)$, with gradients computed via backpropagation. Although this approach is more practical than IFT with an exact lower-level solution, the accuracy of the approximation cannot be controlled, and convergence guarantees are lost.

Inexact bilevel optimization methods (e.g.,~\cite{pedregosa16,salehi2024adaptivelyinexactfirstordermethod}), however, can tolerate a controlled amount of error in the solution of the lower-level problem (and its gradients) while retaining convergence guarantees, hence presenting an attractive alternative to the IFT approach in ADP. In the next section we summarize the inexact bilevel method with adaptive step sizes from~\cite{salehi2024adaptivelyinexactfirstordermethod}, which we propose to use for solving the ADP-$\beta$ problem~\eqref{eq:ADP-Sobolev}. 

%%%%%%%%%%%%%%%%%%%%%%%%%
\subsection{Method of Adaptive Inexact Descent}
Consider the following  bilevel problem 
\begin{equation}\label{generalBilevel}
\min_{b \in H} \{g(\hat{x}(b)) +r(b)\} 
\qquad \text{s.t. }\quad 
{\hat{x}(b)} \coloneqq \arg\min_{x\in X}\ h(x,b),
\end{equation}
where $H$ and $X$ are separable Hilbert spaces and the functions $g,r$ and $h$ satisfy the following assumptions:
\begin{enumerate}
    \item[(A1)] $g$ is convex and both $g$ and $r$ are bounded below;
    \item[(A2)] $\nabla_xh$ and $\nabla_{x}^2h$ are continuous in $b$ and there exist constants $\mu(b)$ and $L(b)$, $0<\mu^*\leq\mu(b)\leq L(b)$, such that $\mu(b)I \preceq \nabla_{x}^2h(x,b) \preceq L(b) I \ \forall x$ (this implies, in particular, that the lower-level objective function is strongly convex);
    \item[(A3)]  $g$ is $L_{\nabla g}$-smooth and $r$ is $L_{\nabla r}$-smooth, which means $g$ and $r$ are continuously differentiable with $L_{\nabla g}$ and $L_{\nabla r}$ Lipschitz gradients, respectively. 
\end{enumerate}

Under (A1)-(A3),
the gradient of the upper-level objective with respect to  $b$ takes the following form, for any $x \in X$,   
\begin{equation}
    \nabla (g \circ \hat{x})(b) + \nabla r(b) = -\nabla_{xb}^2h(x,b)^T\nabla_x^2h(x,b)^{-1} \nabla g(\hat{x}(b)) + \nabla r(b),
\end{equation}
which can be obtained using the IFT.
Inexact methods avoid solving the lower-level problem exactly (which may be prohibitively expensive) and instead work with an approximation   
$\tilde{x}(b)$ such that 
$
\| {\tilde{x}(b)} - {\hat{x}(b)}\| \leq \epsilon,
$
whose error can be controlled 
by the gradient of the lower-level objective: 
\begin{equation*} 
    \|{\tilde{x}(b)} - {\hat{x}(b)}\| \leq \frac{1}{\mu(b)} \| \nabla_{x}h(\tilde{x}(b),b)\|, 
\end{equation*}
where $\mu$ is the strong convexity constant of the lower-level objective. This has been shown in~\cite{salehi2024adaptivelyinexactfirstordermethod}, but can also be seen from a simple lemma the we prove later on (\cref{lm:aposteriori_sc}).
In addition, the inversion of the Hessian can be avoided by solving  the linear system $\nabla_x^2h(\tilde{x}(b),b)q = \nabla g(\tilde{x}(b))$ up to a prescribed accuracy $\delta$, e.g., using the Conjugate Gradient (CG) method. 
Denoting the inexact hypergradient (i.e. gradient of  the upper-level problem) by \(z \coloneqq -\nabla_{x b}h(\tilde{x}(b), b)^T q + \nabla r(b)\), and the corresponding error by \(e \coloneqq z - (\nabla (g \circ \hat{x})(b) + \nabla r(b))\), we get the procedure outlined in \cref{Inexact_grad}.
%%%% Algorithm
\begin{algorithm}[t!]
\caption{Calculating an inexact hypergradient}\label{Inexact_grad}
\begin{algorithmic}[1]
\State{Input: $b \in H$, accuracies $\epsilon, \delta >0$.}
\Function{InexactGradient}{$b, \epsilon,\delta$}
\State{Solve lower-level problem to find $\tilde{x}(b)$ such that $\| \nabla_{x}h(\tilde{x}(b),b)\| \leq \epsilon \mu(b)$.\label{LL}}
\State{Solve $\nabla_x^2 h(\tilde{x}(b),b) q = \nabla g(\tilde{x}(b))$ with residual $\delta$.}\label{CG_step}
\State{Calculate $z = - (\nabla^2_{xb} h(\tilde{x}(b),b))^T q + \nabla r(b)$.}\label{step:Jacob}
\EndFunction
\end{algorithmic}
\end{algorithm}
\begin{algorithm*}[t!]
\caption{\cite[Algorithm 3.1]{salehi2024adaptivelyinexactfirstordermethod} Method of Adaptive Inexact Descent (MAID). Hyperparameters: {$\underline{\rho}, \underline{\nu} \in (0,1)$, $\overline{\rho}, \overline{\nu} >1$}, and  $\max_\text{BT} \in \mathbb{N}$.}\label{alg:MAID}
\begin{algorithmic}[1]
\State Input $b_0 \in H$, accuracies $\epsilon_0, \delta_0$, step size $\alpha_{0} >0$.
\For{$k = 0, 1, \dots$}
\For{$j = \max_\text{BT}, \max_\text{BT}+1, \dots$ }\label{bt_loop}
\State{$z_k, \epsilon_k, \delta_k \leftarrow$ InexactGradient($b_k, \epsilon_k, \delta_k$)} \label{updated_descend_direction}
\For{$i=0,1,\dots,j-1$}\label{inner_loop}
\If{inexact sufficient decrease $\psi(\alpha_k) \leq 0$ holds}\Comment{Cond. \eqref{psi}}
\State{Go to line \ref{gd_update_step}}\Comment{Backtracking successful}
\EndIf
\State{$\alpha_{k} \leftarrow \underline{\rho}\alpha_k$}
\Comment{Adjust the starting step size}
\EndFor
\State $\epsilon_k, \delta_k  \leftarrow \underline{\nu} \epsilon_k, \underline{\nu} \delta_k \label{BT_decrease} $  \Comment{Backtracking failed, needs higher accuracy}
\EndFor
\State{$b_{k+1} \leftarrow b_k - \alpha_{k} z_k$}\Comment{Gradient descent update}\label{gd_update_step}
\State{$\epsilon_{k+1}, \delta_{k+1}, \alpha_{k+1}  \leftarrow \overline{\nu} \epsilon_{k}, \overline{\nu} \delta_{k}, \overline{\rho} \alpha_{k}$ }\label{increase_epsilon_k}\Comment{Increasing $\epsilon_k, \delta_k, \alpha_k$}
\EndFor
\end{algorithmic}
\end{algorithm*}
%%%%
Now consider the inexact descent update 
\begin{equation}\label{inexact_descent}
    b_{k+1} = b_k - \alpha_kz_k, 
\end{equation}
where $\alpha_k$ denotes the step size.
To find a suitable sequence  $\{\alpha_k\}$ and the accuracies of computing the hypergradient $\epsilon_k$ and  $\delta_k$ adaptively,~\cite{salehi2024adaptivelyinexactfirstordermethod} uses the following condition. Choose $\lambda > 0$ and, for each upper-level iteration $k = 0, 1, \dots$, set $u_k \coloneqq (\tilde{x}(b_k), b_k)$ and $\bar{\epsilon}_{k+1} {\coloneqq} \max \{\epsilon_k, \epsilon_{k+1}\}$. Let
$\ell(x(b),b) := g(x(b)) + r(b)$ and 
\begin{equation}\label{psi}
    \psi(\alpha_k) \coloneqq \ell(u_{k+1}) + \| \nabla_x \ell(u_{k+1}) \| \bar{\epsilon}_{k+1} 
    + \frac{L_{\nabla_x \ell}}{2} \  \bar{\epsilon}_{k+1}^2 - \ell(u_{k}) \\ 
    + \| \nabla_x \ell(u_{k}) \| \bar{\epsilon}_{k+1} + \lambda \alpha_k \|z_k\|^2.
\end{equation}

\begin{lemma}[Sufficient decrease condition, \cite{salehi2024adaptivelyinexactfirstordermethod}]\label{lem:suff-descent}
    Suppose that the condition $\psi(\alpha_k) \leq 0$ is satisfied at $\alpha_k$. Then the sufficient decrease in the exact upper-level function $g(\hat{x}(b_{k+1}) + r(b_{k+1}) - g(\hat{x}(b_{k}) - r(b_{k}) \leq -\lambda \alpha_k \|z_k\|^2$ holds.
\end{lemma}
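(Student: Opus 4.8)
The plan is to control the change in the \emph{exact} upper-level objective $\ell(\hat{x}(b), b) = g(\hat{x}(b)) + r(b)$ by the \emph{computable} quantities appearing in $\psi$, which are evaluated only at the inexact iterates $u_k = (\tilde{x}(b_k), b_k)$. The starting observation is that, since $r$ does not depend on $x$ and $g$ is convex (A1) and $L_{\nabla g}$-smooth (A3), the map $x \mapsto \ell(x, b) = g(x) + r(b)$ is convex and $L_{\nabla_x \ell}$-smooth for every fixed $b$, with $\nabla_x \ell(x,b) = \nabla g(x)$ and $L_{\nabla_x \ell} = L_{\nabla g}$. I would then sandwich each exact objective value between its inexact counterpart, applying the quadratic (smoothness) bound in one direction and the linear (convexity) bound in the other, and invoking the a posteriori error estimate $\norm{\tilde{x}(b) - \hat{x}(b)} \leq \epsilon$ recorded earlier.

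For the \emph{later} iterate I would produce an \emph{upper} bound. Applying the descent lemma to $\ell(\cdot, b_{k+1})$ at the base point $\tilde{x}(b_{k+1})$ and evaluating at $\hat{x}(b_{k+1})$, then using Cauchy--Schwarz on the inner-product term together with $\norm{\tilde{x}(b_{k+1}) - \hat{x}(b_{k+1})} \leq \epsilon_{k+1} \leq \bar{\epsilon}_{k+1}$, yields
\[
\ell(\hat{x}(b_{k+1}), b_{k+1}) \leq \ell(u_{k+1}) + \norm{\nabla_x \ell(u_{k+1})}\,\bar{\epsilon}_{k+1} + \frac{L_{\nabla_x \ell}}{2}\,\bar{\epsilon}_{k+1}^2 .
\]
For the \emph{earlier} iterate I would instead produce a \emph{lower} bound, and here it is essential to use convexity rather than the descent lemma: the first-order convexity inequality for $\ell(\cdot, b_k)$ at $\tilde{x}(b_k)$, evaluated at $\hat{x}(b_k)$, followed by Cauchy--Schwarz and $\epsilon_k \leq \bar{\epsilon}_{k+1}$, gives
\[
\ell(\hat{x}(b_k), b_k) \geq \ell(u_k) - \norm{\nabla_x \ell(u_k)}\,\bar{\epsilon}_{k+1} .
\]
Subtracting the second estimate from the first, the right-hand side is exactly $\psi(\alpha_k) - \lambda \alpha_k \norm{z_k}^2$ as defined in \eqref{psi}; since $\psi(\alpha_k) \leq 0$ by hypothesis, this collapses to $\ell(\hat{x}(b_{k+1}), b_{k+1}) - \ell(\hat{x}(b_k), b_k) \leq -\lambda \alpha_k \norm{z_k}^2$, which is the claim.

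The main subtlety---and the step I would be most careful about---is the asymmetry between the two bounds: every gradient term must end up evaluated at the inexact, hence available, iterates $u_k, u_{k+1}$, with no residual gradient at the (uncomputable) exact minimizers $\hat{x}(b_k)$. This is precisely why the quadratic smoothness bound is used at $k+1$ (it keeps the gradient at the base point $\tilde{x}(b_{k+1})$, at the cost of the $\tfrac{L_{\nabla_x\ell}}{2}\bar{\epsilon}_{k+1}^2$ term) while the linear convexity bound is used at $k$ (it keeps the gradient at $\tilde{x}(b_k)$ with no quadratic term, but requires convexity of $g$). A secondary point to verify is that $\bar{\epsilon}_{k+1} = \max\{\epsilon_k, \epsilon_{k+1}\}$ dominates both per-iteration accuracies, allowing a single $\bar{\epsilon}_{k+1}$ to absorb the errors at both $k$ and $k+1$. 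Since the argument uses only convexity and smoothness of $g$ together with inner products and norms, it transfers verbatim from the Euclidean to the separable Hilbert space setting.
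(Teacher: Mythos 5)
Your proof is correct, and since the paper states this lemma without proof (importing it from the cited MAID paper), your argument is in fact a faithful reconstruction of the source's proof: a descent-lemma upper bound at $u_{k+1}$, a convexity lower bound at $u_k$, and subtraction so that the right-hand side is exactly $\psi(\alpha_k) - \lambda\alpha_k\|z_k\|^2$. Your observation about the asymmetry is exactly the right one --- the single quadratic term $\tfrac{L_{\nabla_x\ell}}{2}\bar{\epsilon}_{k+1}^2$ in \eqref{psi} is only consistent with using convexity (assumption (A1)), not smoothness, for the lower bound at iteration $k$.
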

In practice, since the components in \eqref{psi} depend on the accuracy, both the accuracy $\bar{\epsilon}_{k+1}$ and the step size $\alpha_k$ are decreased until the inequality holds. If the inequality is already satisfied, they are increased to reduce computational cost and take potentially larger steps.
The resulting algorithm, called the Method of Adaptive Inexact Descent (MAID), is summarized in \cref{alg:MAID}.

\begin{theorem}[convergence to a stationary point, {\cite{salehi2024adaptivelyinexactfirstordermethod}}]\label{thm:stationary-point}
    Suppose that the sub-level sets of $r(\cdot)$ from~\eqref{generalBilevel} are strongly compact in $H$. Then, under (A1)-(A3) 
    and with adaptive parameters chosen as in \cref{lem:suff-descent}, the iterates $b_k$ of \cref{alg:MAID} converge   to a stationary point $b^*$ strongly in $H$. Hence, we have 
    $$
    \lim_{k\to \infty} \|\nabla (g \circ \hat{x})(b_k) + \nabla r(b_k)\| = 0.
    $$
\end{theorem}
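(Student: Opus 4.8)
The plan is to run the standard descent-method template on the exact upper-level objective $F(b) := g(\hat{x}(b)) + r(b)$: establish a sufficient-decrease inequality, deduce summability of the hypergradient norms, conclude $\|\nabla F(b_k)\| \to 0$, and only then exploit the compactness hypothesis to recover strong convergence of the iterates. The crucial observation is that the first three of these steps transcribe verbatim from the Euclidean argument of \cite{salehi2024adaptivelyinexactfirstordermethod}, since none of them uses finite-dimensionality --- they rely only on the inner-product structure of $H$ and $X$, the Lipschitz constants $L_{\nabla g}, L_{\nabla r}$ from (A3), and the uniform Hessian bounds $\mu^* \le \mu(b) \le L(b)$ from (A2), all of which are available in a separable Hilbert space.

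First I would check that the two backtracking loops of \cref{alg:MAID} terminate at every outer iteration $k$. The inner loop shrinks $\alpha_k$ geometrically and the outer loop shrinks the accuracies $\epsilon_k, \delta_k$; by (A3) and \cref{lem:suff-descent} the quantity $\psi(\alpha_k)$ becomes nonpositive once $\alpha_k$ and $\bar\epsilon_{k+1}$ are small enough relative to $\|z_k\|^2$, so finitely many reductions suffice. The same smoothness estimate yields a uniform lower bound $\alpha_k \ge \alpha_{\min} > 0$ on the accepted step sizes --- backtracking cannot push $\alpha_k$ below a threshold fixed by the smoothness and strong-convexity constants --- and this quantitative fact is what I will use below.

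Next, \cref{lem:suff-descent} gives $F(b_{k+1}) - F(b_k) \le -\lambda \alpha_k \|z_k\|^2$. Since $g$ and $r$ are bounded below by (A1), $F$ is bounded below, and telescoping yields $\sum_k \alpha_k \|z_k\|^2 < \infty$; together with $\alpha_k \ge \alpha_{\min}$ this forces $\|z_k\| \to 0$. To transfer this to the exact hypergradient I would bound the error $e_k := z_k - \nabla F(b_k)$: the a posteriori estimate relating the lower-level error to $\|\nabla_x h(\tilde{x}(b_k),b_k)\|$ controls $\|\tilde{x}(b_k)-\hat{x}(b_k)\| \le \epsilon_k$, the CG residual controls the linear-system error by $\delta_k$, and combining these through (A2)--(A3) gives $\|e_k\| \le C(\epsilon_k + \delta_k)$. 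Because the accuracy terms enter $\psi$ additively, the accepted accuracies are forced to shrink in step with $\|z_k\|$, so $e_k \to 0$ and hence $\|\nabla F(b_k)\| \to 0$.

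The genuinely infinite-dimensional step --- and the one I expect to be the main obstacle --- is upgrading this to strong convergence of $b_k$ to a stationary point. Monotonicity of $F$ keeps the iterates in $\{b : F(b) \le F(b_0)\}$, and since $g \ge \inf g$ this gives $r(b_k) \le F(b_0) - \inf g$, so $\{b_k\}$ lies in a sub-level set of $r$, which is strongly compact by hypothesis. Hence a subsequence $b_{k_j} \to b^*$ strongly, and continuity of the hypergradient map $b \mapsto \nabla F(b)$ (furnished by the continuity of $\nabla_x h$ and $\nabla_x^2 h$ in (A2) together with (A3), which also make $\hat{x}(b)$ well-defined and continuous) combined with $\|\nabla F(b_{k_j})\| \to 0$ gives $\nabla F(b^*) = 0$. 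The delicate point is passing from a subsequence to the whole sequence: boundedness is too weak in infinite dimensions, and even with compactness one only controls $\|b_{k+1}-b_k\| = \alpha_k \|z_k\| \to 0$ rather than summability of the steps. I would close this gap by noting that compactness makes the accumulation set nonempty and, since consecutive iterates coalesce, connected; every accumulation point is stationary, so under the mild additional structure that stationary points of $F$ are isolated (or via a Kurdyka--\L ojasiewicz inequality for $F$) the whole sequence converges strongly to the single limit $b^*$.
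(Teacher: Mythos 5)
Your proposal follows essentially the same route as the paper's, with the caveat that the paper never spells this argument out: its ``proof'' of \cref{thm:stationary-point} consists of citing the finite-dimensional analysis of \cite{salehi2024adaptivelyinexactfirstordermethod}, remarking that it adapts verbatim to separable Hilbert spaces, and supplying the single ingredient that genuinely needs attention in infinite dimensions, namely the a~posteriori strong-convexity bound of \cref{lm:aposteriori_sc} and the ensuing corollary that the inexact lower-level solutions $\tilde x(b_k)$ converge strongly to $\hat x(b^*)$. Your first three steps (finite termination of the backtracking loops, sufficient decrease via \cref{lem:suff-descent} and telescoping against the lower bound from (A1), and control of the hypergradient error through $\|\tilde x(b_k)-\hat x(b_k)\|\le\epsilon_k$ and the CG residual $\delta_k$) are precisely the dimension-independent core of that cited argument, and your error estimate rests on the same strong-convexity inequality that the paper isolates as \cref{lm:aposteriori_sc}. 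Your use of the strong compactness of the sub-level sets of $r$ to extract a strongly convergent subsequence of $\{b_k\}$ is exactly the role this hypothesis is meant to play.

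Where you diverge is in flagging the subsequence-to-whole-sequence step, and this is a genuine and worthwhile observation rather than a defect of your argument. The descent machinery alone yields $\lim_k\|\nabla(g\circ\hat x)(b_k)+\nabla r(b_k)\|=0$ together with stationarity of every strong accumulation point; upgrading this to convergence of the entire sequence $b_k$ to a single limit $b^*$ does in general require additional structure --- isolated stationary points, or a Kurdyka--\L{}ojasiewicz-type inequality for the upper-level objective $F(b)=g(\hat x(b))+r(b)$ --- and none of this appears among (A1)--(A3) or the compactness hypothesis. The paper sidesteps the issue entirely by attributing the full statement to \cite{salehi2024adaptivelyinexactfirstordermethod}, so it offers no mechanism you are missing. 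In other words, the extra hypothesis you introduce is needed to justify the first sentence of the theorem as literally stated; what the argument both you and the paper rely on actually delivers, without such an addition, is the displayed gradient limit and strong subsequential convergence to stationary points.
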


\begin{remark}
    The proofs of~\cite{salehi2024adaptivelyinexactfirstordermethod} are in the finite-dimensional setting where $H$ and $X$ are Euclidean spaces, but they are easily adapted to the case of separable Hilbert spaces. The only caveat is guaranteeing strong convergence of the iterates $\tilde x(b_k)$ to the solution  $\hat x(b^*)$ of the limiting lower-level problem, as the next \mbox{lemma shows.}
\end{remark}

\begin{lemma}\label{lm:aposteriori_sc}
    Let $\Phi : X \rightarrow \mathbb{R}$, where $X$ is a separable Hilbert space, be $\mu$-strongly convex and differentiable. Denote the minimizer of $\Phi$ by $x^* \in X$. Then,  we have 
    \[
    \lVert x^* - x \rVert \leq \frac{1}{\mu} \ \lVert \nabla \Phi(x) \rVert, \qquad \forall x \in X. 
    \]
\end{lemma}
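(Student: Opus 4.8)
The plan is to exploit the defining inequality of $\mu$-strong convexity directly, without appealing to optimality conditions beyond the fact that $\nabla \Phi(x^*) = 0$. Recall that $\mu$-strong convexity of a differentiable $\Phi$ is equivalent to the statement that for all $u, v \in X$,
\[
    \Phi(v) \geq \Phi(u) + \sp{\nabla \Phi(u), v - u} + \frac{\mu}{2} \norm{v - u}^2.
\]
First I would instantiate this inequality twice, swapping the roles of the two points, so as to produce a symmetric pair that can be added together.

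The key computation is as follows. Setting $u = x$ and $v = x^*$ gives one inequality, and setting $u = x^*$ and $v = x$ gives a second. Adding the two and using $\nabla \Phi(x^*) = 0$ (since $x^*$ is the minimizer) makes the $\Phi(x^*)$ and $\Phi(x)$ terms cancel, leaving
\[
    0 \geq \sp{\nabla \Phi(x), x^* - x} + \mu \norm{x^* - x}^2.
\]
Rearranging and applying the Cauchy--Schwarz inequality to the inner product term yields
\[
    \mu \norm{x^* - x}^2 \leq \sp{\nabla \Phi(x), x - x^*} \leq \norm{\nabla \Phi(x)} \, \norm{x - x^*},
\]
and dividing through by $\mu \norm{x^* - x}$ (the case $x = x^*$ being trivial, as both sides vanish) gives the claimed bound.

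I expect no genuine obstacle here: the argument is the standard strong-convexity estimate, and the separable Hilbert space setting plays no special role beyond the availability of the inner product and Cauchy--Schwarz, both of which hold in any Hilbert space. The only points requiring a word of care are the existence and uniqueness of the minimizer $x^*$ (which follow from strong convexity, ensuring the gradient vanishes there) and the degenerate case $x = x^*$, which must be separated out before dividing. An alternative one-line route would be to use the gradient monotonicity characterization of strong convexity, namely $\sp{\nabla \Phi(x) - \nabla \Phi(x^*), x - x^*} \geq \mu \norm{x - x^*}^2$, combined with $\nabla \Phi(x^*) = 0$ and Cauchy--Schwarz; this is essentially the same estimate obtained more compactly.
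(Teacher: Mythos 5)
Your proof is correct and follows essentially the same route as the paper: adding the two swapped strong-convexity inequalities is precisely the paper's step of summing the Bregman distances $D_\Phi(x,x^*) + D_\Phi(x^*,x)$ to obtain the gradient-monotonicity bound, after which both arguments conclude with Cauchy--Schwarz and $\nabla\Phi(x^*)=0$. The only cosmetic difference is that the paper phrases the symmetrization in Bregman-distance language and substitutes $y=x^*$ at the end rather than at the start.
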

\begin{proof}
Consider the Bregman distance 
$
D_\Phi(x,y) = \Phi(x) - \Phi(y) - \langle \nabla \Phi(y), x - y \rangle$, for all $x, y \in X$.
For a $\mu$-strongly convex function $\Phi$, it holds that
$\frac{\mu}{2}\lVert x - y \rVert^2 \leq D_\Phi(x,y)$.
Hence,
\begin{align*}
    \mu \lVert x - y \rVert^2 &\leq D_\Phi(x,y) + D_\Phi(y,x) = \langle \nabla \Phi(x) - \nabla \Phi(y), x - y \rangle. 
\end{align*}
Using the Cauchy--Schwarz inequality on the right-hand side, we get  
$$
{\mu}\lVert x - y \rVert^2 \leq \lVert \nabla \Phi(x) - \nabla \Phi(y) \rVert \lVert x - y \rVert,
$$  
which for $x\neq y$ yields
$  \mu \lVert x - y \rVert \leq \lVert \nabla \Phi(x) - \nabla \Phi(y) \rVert.$
Setting $y = x^*$ and noting that $\nabla \Phi(x^*) = 0$ completes the proof.
\end{proof}
\begin{remark}
Of course, \cref{lm:aposteriori_sc} remains true if $\Phi$ is only \emph{sub}-differentiable.
\end{remark}
\begin{corollary}
    Under the assumptions of \cref{thm:stationary-point}, the iterates $\tilde x(b_k)$ converge strongly to to the solution  $\hat x(b^*)$ of the limiting lower-level problem.
\end{corollary}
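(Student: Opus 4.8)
The plan is to combine the strong convergence $b_k \to b^*$ furnished by \cref{thm:stationary-point} with two applications of \cref{lm:aposteriori_sc}: one controlling the inexactness of $\tilde{x}(b_k)$ relative to the exact lower-level solution $\hat{x}(b_k)$, and one establishing continuity of the solution map $b \mapsto \hat{x}(b)$ at $b^*$. Splitting via the triangle inequality,
\[
\norm{\tilde{x}(b_k) - \hat{x}(b^*)} \leq \norm{\tilde{x}(b_k) - \hat{x}(b_k)} + \norm{\hat{x}(b_k) - \hat{x}(b^*)},
\]
reduces the corollary to showing that both terms on the right vanish.

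For the first term I would apply \cref{lm:aposteriori_sc} with $\Phi = h(\cdot,b_k)$, whose unique minimizer is $\hat{x}(b_k)$ and which is $\mu(b_k)$-strongly convex by (A2), evaluated at the point $x = \tilde{x}(b_k)$. The accuracy requirement enforced in Line~\ref{LL} of \cref{Inexact_grad}, namely $\norm{\nabla_x h(\tilde{x}(b_k),b_k)} \leq \epsilon_k\,\mu(b_k)$, combined with the lemma yields $\norm{\tilde{x}(b_k) - \hat{x}(b_k)} \leq \epsilon_k$. It then remains to invoke $\epsilon_k \to 0$, which I would extract from the convergence analysis of MAID in~\cite{salehi2024adaptivelyinexactfirstordermethod}, where the accuracies are driven to zero since the hypergradient error must vanish for convergence to a stationary point.

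For the second term I need continuity of $b \mapsto \hat{x}(b)$, and the key is to apply \cref{lm:aposteriori_sc} again to $\Phi = h(\cdot,b_k)$ but evaluated at the \emph{fixed} point $x = \hat{x}(b^*)$ rather than the moving point $\hat{x}(b_k)$. Using the first-order optimality condition $\nabla_x h(\hat{x}(b^*),b^*) = 0$ and the uniform lower bound $\mu(b_k) \geq \mu^*$ from (A2), this gives
\[
\norm{\hat{x}(b_k) - \hat{x}(b^*)} \leq \frac{1}{\mu(b_k)} \norm{\nabla_x h(\hat{x}(b^*),b_k)} \leq \frac{1}{\mu^*}\,\norm{\nabla_x h(\hat{x}(b^*),b_k) - \nabla_x h(\hat{x}(b^*),b^*)}.
\]
Since the first argument is now frozen, continuity of $\nabla_x h$ in $b$ (A2) together with $b_k \to b^*$ forces the right-hand side to zero, establishing $\hat{x}(b_k) \to \hat{x}(b^*)$ strongly. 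This fixed-point device is what avoids any need for joint or locally uniform continuity of $\nabla_x h$, which would otherwise be delicate because the argument $\hat{x}(b_k)$ varies with $k$.

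The main obstacle I anticipate is therefore not the continuity estimate, which the device above handles cleanly, but rather verifying $\epsilon_k \to 0$. Because the MAID accuracies are decreased only on backtracking failures yet multiplied by $\overline{\nu} > 1$ on successes, their convergence to zero is not immediate from the update rule and must be read off from the convergence proof of~\cite{salehi2024adaptivelyinexactfirstordermethod}; I would confirm there that the stationarity conclusion $\norm{\nabla(g\circ\hat{x})(b_k) + \nabla r(b_k)} \to 0$ indeed entails $\epsilon_k \to 0$, and otherwise supply the missing argument before concluding via the triangle inequality.
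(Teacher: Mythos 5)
Your proposal is correct and follows the same skeleton as the paper's proof: split $\norm{\tilde x(b_k) - \hat x(b^*)}$ by the triangle inequality, control the inexactness term $\norm{\tilde x(b_k) - \hat x(b_k)} \leq \epsilon_k$ via \cref{lm:aposteriori_sc} and the stopping criterion in line~\ref{LL} of \cref{Inexact_grad}, and combine with strong convergence of the exact minimizers $\hat x(b_k) \to \hat x(b^*)$. Where you go beyond the paper is the second term: the paper dismisses the convergence $\hat x(b_k) \to \hat x(b^*)$ as ``easy to check'' without giving an argument, whereas you supply one, and it is a clean one --- applying \cref{lm:aposteriori_sc} to $\Phi = h(\cdot,b_k)$ at the \emph{frozen} point $\hat x(b^*)$, so that $\norm{\hat x(b_k) - \hat x(b^*)} \leq \frac{1}{\mu^*}\norm{\nabla_x h(\hat x(b^*),b_k) - \nabla_x h(\hat x(b^*),b^*)}$, which vanishes by the continuity of $\nabla_x h$ in $b$ assumed in (A2) and $b_k \to b^*$. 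This device indeed sidesteps any need for joint or locally uniform continuity, and it is presumably the argument the paper has in mind; making it explicit strengthens the exposition. Your caveat about $\epsilon_k \to 0$ is also well placed: the paper asserts it without comment, and since \cref{alg:MAID} increases $\epsilon_k$ by the factor $\overline{\nu}>1$ after every successful step (line~\ref{increase_epsilon_k}), this fact is genuinely nontrivial; it is established in the convergence analysis of~\cite{salehi2024adaptivelyinexactfirstordermethod} (where stationarity forces the accuracies to vanish along the iterations), so deferring to that reference, as both you and the paper implicitly do, closes the argument.
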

\begin{proof}
    It is easy to check that the minimizers of the lower-level problem converge strongly, $\hat x(b_k) \to \hat x(b^*)$. Furthermore, by \cref{lm:aposteriori_sc}, we have that %%\vspace{-0.75em}
    \begin{equation*}
        \norm{\tilde x(b_k) - \hat x(b_k)} \leq \frac1\mu \| \nabla_{x}h(\tilde{x}(b_k),b_k)\| \leq \epsilon_k \to 0. %%\vspace{-0.75em}
    \end{equation*}
    An application of the triangle inequality completes the proof.
\end{proof}

%%%%%%%%%%%%%%%%%%%%%%%%%%%%%%%%%
\subsection{Inexact bilevel optimization applied to ADP-$\beta$}

We now discuss briefly how the ADP-$\beta$ problem~\eqref{eq:ADP-Sobolev} fits into the general setting of~\eqref{generalBilevel}. Here we have $H=X=L^2(\Omega)$, $g(\cdot) = \norm{T(a,\cdot)-y^\delta}^2_{L^2}$, $r(\cdot) = \beta \norm{\cdot-a}_{H^1}^2$ and  $h(\cdot) = \norm{T(b,\cdot)-y^\delta}^2_{L^2} + \alpha\reg(\cdot)$. By the Sobolev embedding theorem, the sub-level sets of $r(\cdot)$ are compact in $L^2$.
The validity of (A1) is obvious. For (A2) we have the following proposition.

\begin{proposition}\label{prop:L(b)}
    Suppose that the regularizer $\reg$ is $\mu_\reg$-strongly convex, twice continuously differentiable and $\sup_x \norm{\grad_x^2 \reg(x)}_{\calL(L^2)} \leq C_\reg$ for some constants $\mu_\reg, C_\reg>0$. Then $\mu(b)$ and $L(b)$ in 
    (A2) 
    can be taken  as 
    \begin{equation*}
        \mu(b) = \alpha\mu_\reg, \quad L(b) = \norm{b}_{L^2(\Omega \times \Omega)}^2  + \alpha C_\reg.
    \end{equation*}
\end{proposition}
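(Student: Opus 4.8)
The plan is to compute the lower-level Hessian $\nabla_x^2 h(x,b)$ in closed form and then bound it from below and above uniformly in $x$, reading off $\mu(b)$ and $L(b)$ as the resulting constants. Write $B \defeq T(b,\cdot) \in \calL(L^2(\Omega))$ for the integral operator with kernel $b$, so that $h(x,b) = \norm{Bx - y^\delta}_{L^2}^2 + \alpha \reg(x)$. Differentiating twice in $x$, the data-fidelity term contributes the (constant in $x$) Hessian $2B^*B$, while the regularizer contributes $\alpha\,\grad_x^2 \reg(x)$, so that
\begin{equation*}
    \grad_x^2 h(x,b) = 2\,B^*B + \alpha\, \grad_x^2 \reg(x).
\end{equation*}

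For the lower bound I would use that $B^*B \succeq 0$ together with the $\mu_\reg$-strong convexity of $\reg$, which gives $\grad_x^2 \reg(x) \succeq \mu_\reg I$ for every $x$. Since both summands are bounded below simultaneously, $\grad_x^2 h(x,b) \succeq \alpha\mu_\reg I$ uniformly in $x$ and in $b$, so one may take $\mu(b) = \alpha\mu_\reg$. For the upper bound the decisive estimate is that the operator norm of an integral operator is controlled by the $L^2$-norm of its kernel: $B$ is Hilbert--Schmidt with $\norm{B}_{\calL(L^2)} \leq \norm{B}_{\mathrm{HS}} = \norm{b}_{L^2(\Omega \times \Omega)}$. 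Combining this with the assumed bound $\sup_x \norm{\grad_x^2 \reg(x)}_{\calL(L^2)} \leq C_\reg$ and the triangle inequality for the operator norm gives $\norm{\grad_x^2 h(x,b)}_{\calL(L^2)} \leq 2\norm{b}_{L^2(\Omega \times \Omega)}^2 + \alpha C_\reg$, which reproduces the stated $L(b)$ up to the multiplicative constant coming from the normalization of the data misfit (with the customary factor $\tfrac12$ on $\norm{B\cdot - y^\delta}^2$ one obtains exactly $\norm{b}_{L^2}^2 + \alpha C_\reg$; in any case a larger admissible $L(b)$ only strengthens the smoothness requirement).

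The continuity of $\grad_x h$ and $\grad_x^2 h$ in $b$ demanded by (A2) is then immediate and routine: the map $b \mapsto B$ is linear and bounded from $L^2(\Omega \times \Omega)$ into the Hilbert--Schmidt operators, so $b \mapsto B^*B$ is (quadratically) continuous, and $\grad_x h(x,b) = 2B^*(Bx - y^\delta) + \alpha \grad_x \reg(x)$ depends continuously on $B$ as well.

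I expect the only genuine obstacle to be the passage from the kernel $b$ to the operator norm of $B$, i.e.\ verifying that $T(b,\cdot)$ is Hilbert--Schmidt with $\norm{B}_{\mathrm{HS}} = \norm{b}_{L^2(\Omega \times \Omega)}$ and that this dominates $\norm{B}_{\calL(L^2)}$; this is precisely where the $\norm{b}_{L^2(\Omega \times \Omega)}^2$ term in $L(b)$ originates, and where one must be careful about the normalizing constant on the data term. Everything else---positivity of $B^*B$, the strong-convexity lower bound on $\grad_x^2 \reg$, and the triangle inequality---is standard.
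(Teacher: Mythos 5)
Your proof follows essentially the same route as the paper's: compute the lower-level Hessian as $B^*B$ plus $\alpha\grad_x^2\reg(x)$, obtain $\mu(b)=\alpha\mu_\reg$ from positivity of $B^*B$ and strong convexity of $\reg$, and obtain $L(b)$ from the triangle inequality together with the control of the operator norm of $B$ by the $L^2$-norm of its kernel. You are in fact slightly more careful than the paper on two points: the paper omits the factor $2$ in the Hessian of the data term (so its stated $L(b)$ implicitly presumes a $\tfrac{1}{2}$ normalization of the misfit, exactly as you observe), and it asserts $\norm{B}_{\calL(L^2)} = \norm{b}_{L^2(\Omega\times\Omega)}$ as an equality, whereas only the inequality $\norm{B}_{\calL(L^2)} \leq \norm{B}_{\mathrm{HS}} = \norm{b}_{L^2(\Omega\times\Omega)}$ holds in general --- which is all that the upper bound requires.
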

\begin{proof}
    The choice of $\mu(b)$ is obvious. For $L(b)$, we     
    start by computing the Hessian of the lower-level problem:
    \begin{align*}
        \grad_x \left( \norm{T(b,x)-y^\delta}_{L^2}^2 + \alpha \reg(x)\right) &= B^*Bx - B^*By^\delta + \alpha \grad_x \reg(x), \qquad \text{and}\\
        \grad_x^2 \left( \norm{T(b,x)-y^\delta}_{L^2}^2 + \alpha \reg(x)\right) &= B^*B  + \alpha \grad_x^2 \reg(x),
    \end{align*}
    where $B$ is defined analoguously to~\eqref{eq:B-def}. Taking the operator norm and a supremum in $x$ we get, with the triangle inequality, that 
    \begin{align*}
        \sup_x\norm{\grad_x^2 \left( \norm{T(b,x)-y^\delta}_{L^2}^2 + \alpha \reg(x)\right)}_{\calL(L^2)} 
        \leq  \norm{B}_{\calL(L^2)}^2 + \alpha C_\reg.
    \end{align*}
    It remains to note that $\norm{B}_{\calL(L^2)} = \norm{b}_{L^2(\Omega \times \Omega)}$.
\end{proof}
\begin{remark}
    Because of the regularization term $\norm{b-a}_{H^1}^2$ in the upper-level problem in~\eqref{eq:ADP-Sobolev}, the values $L(b_k)$  are bounded uniformly in $k$.
\end{remark}

Assumption (A3) is more problematic. The term $r(b) = \norm{b-b_0}^2_{H^1}$ is not differentiable on $L^2$ (it is only subdifferentiable), hence \cref{thm:stationary-point} does not apply. It is possible to reinterpret \cref{alg:MAID} as a subgadient method  for the upper-level objective (with a linearization of the non-convex but smooth term $\norm{T(a,\hat x(b))-y^\delta}^2_{L^2}$ at each iteration) but ensuring convergence of the subgradient method typically requires either diminishing step sizes (which is not satisfied for MAID~\cite{salehi2024adaptivelyinexactfirstordermethod}) or a uniform bound on the subgradients~\cite{shor2012minimization}, which is not satisfied in the ADP-$\beta$ problem~\eqref{eq:ADP-Sobolev}. 

Proximal methods such as PALM~\cite{bolte2014proximal} would be applicable, but, to the best of our knowledge, an adaptive inexact  version of this algorithm \`a la~\cite{salehi2024adaptivelyinexactfirstordermethod} is not available. This is subject of our future work. In our numerical experiments, which we view as a proof of concept, we stay in the finite-dimensional setting where the non-differentiability issue does not arise.

%%%%%%%%%%%%%%%%%%%%%%%%%%%%
\section{Numerical Experiments}
\label{sec:experiments}
In this section, we present the numerical results of applying our method, which we call \Our{}, to the solution of the ADP-$\beta$ bilevel problem \eqref{eq:ADP-Sobolev}. First, we compare the efficiency of \Our{} 
to that of Algorithms 1 and 2 in \cite{arndt2022regularization}, on the same 1D deconvolution problem used in \cite{arndt2022regularization}, using the same settings of the original paper. We use the implementation from the supplementary material\footnote{\texttt{https://gitlab.informatik.uni-bremen.de/carndt/analytic\_deep\_prior}}
of \cite{arndt2022regularization} for Algorithms 1 and 2.

\begin{figure}[t!]
    \centering
\begin{tabular}{@{}c@{\quad}c@{\quad}c@{}}
\includegraphics[width=0.3\textwidth]{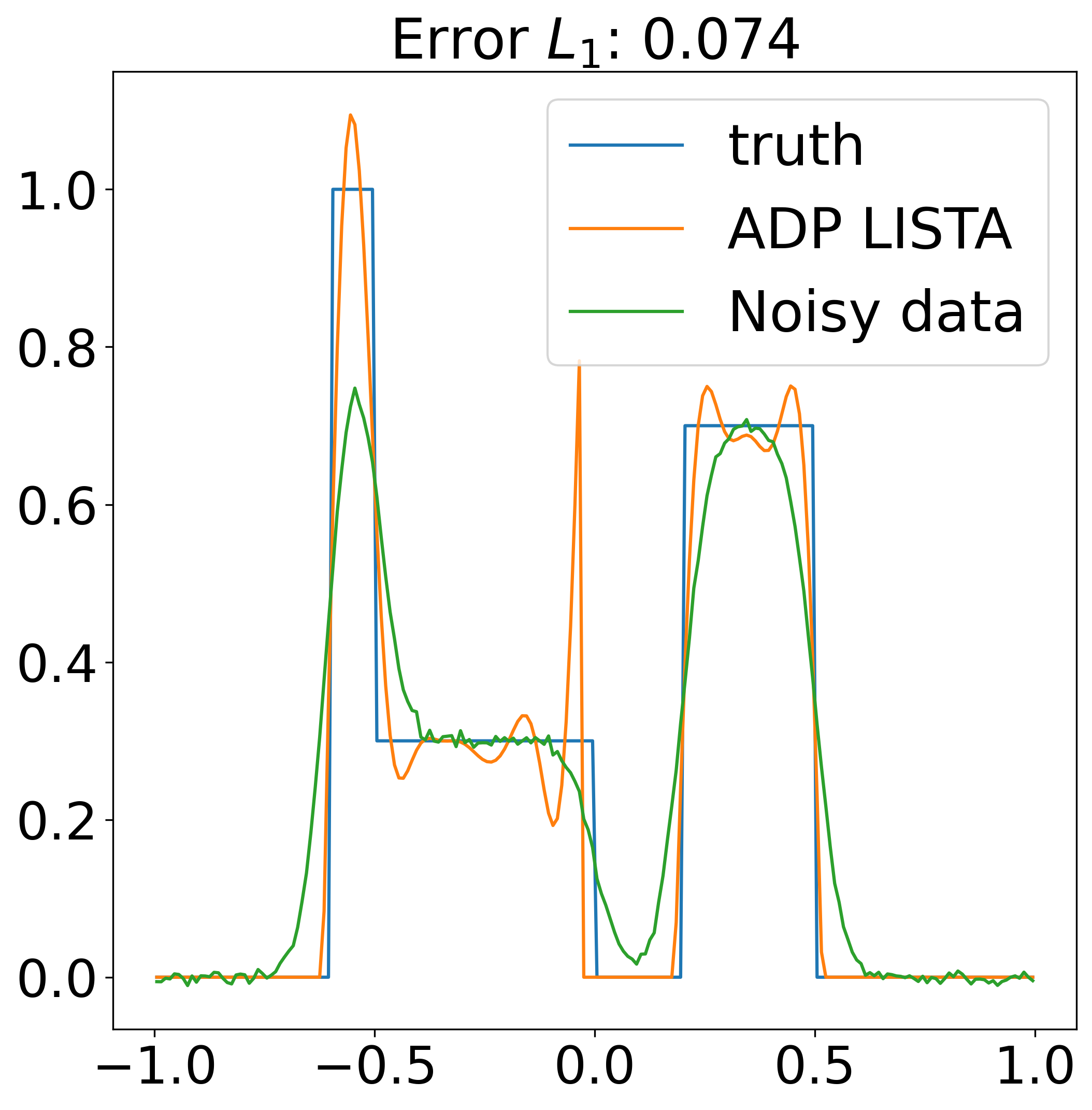}
& \includegraphics[width=0.3\textwidth]{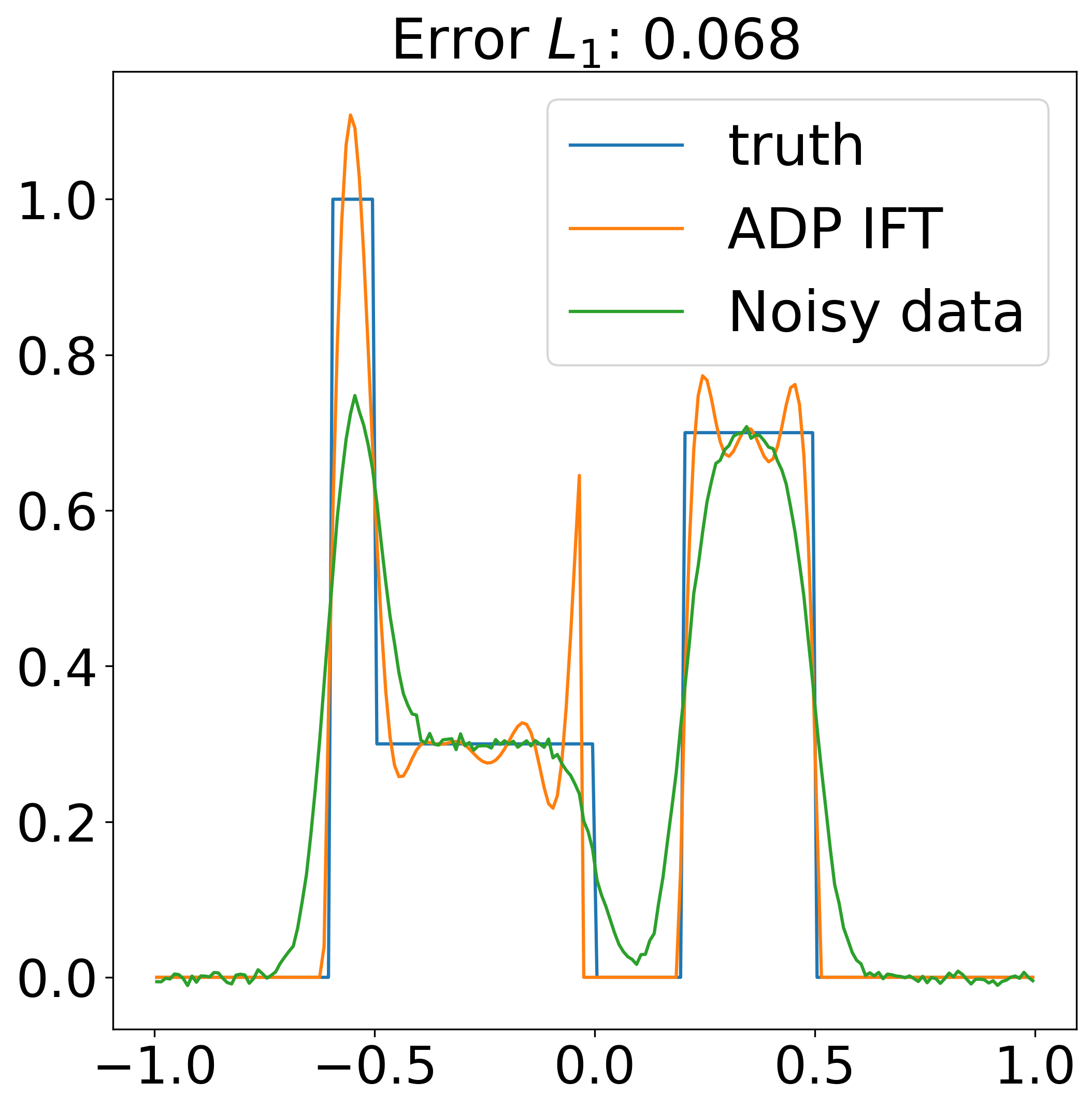}
& \includegraphics[width=0.3\textwidth]{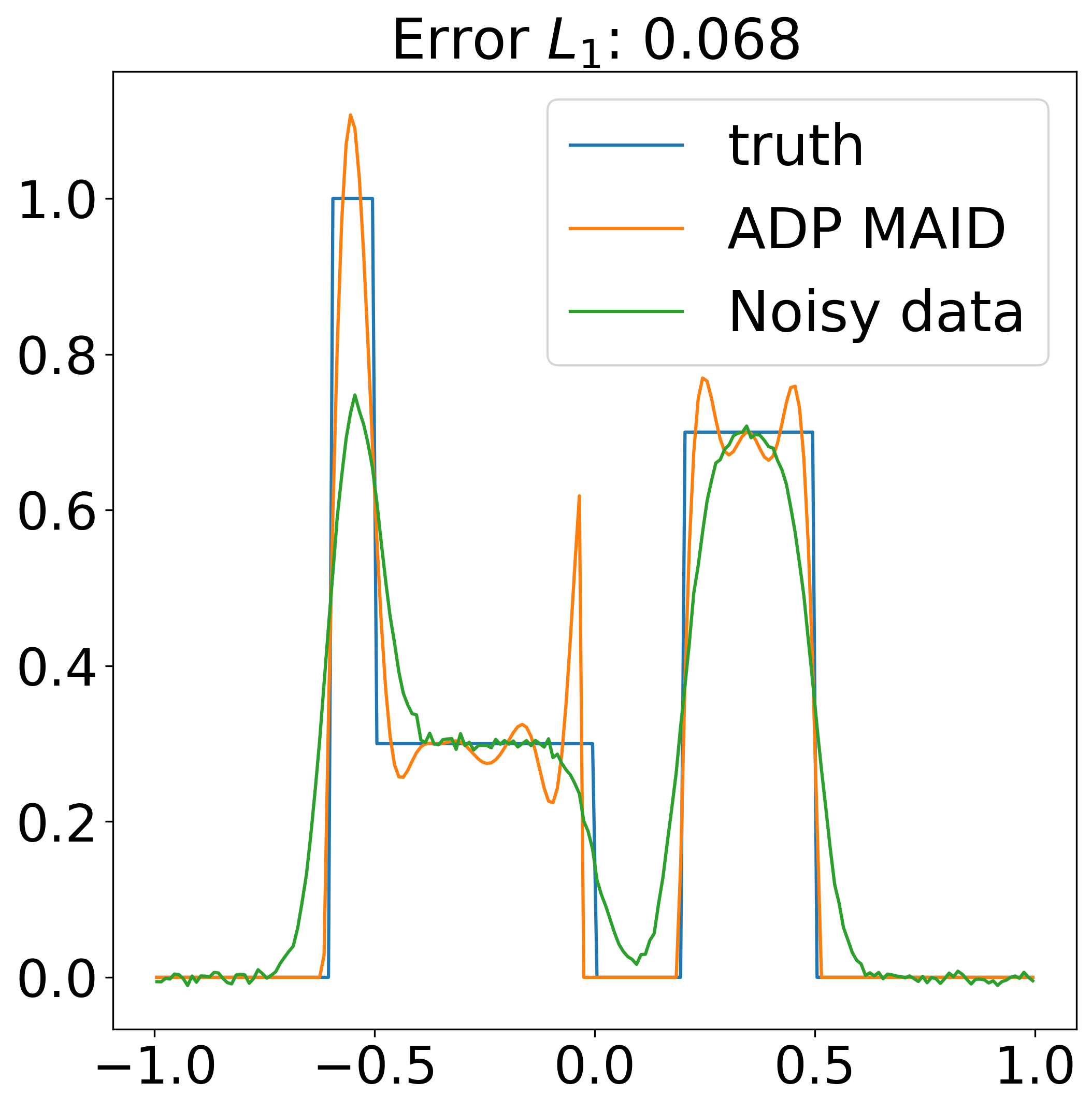}
\end{tabular}
\caption{Deblurring of a 1D signal using ADP with elastic-net regularizer, solved via ADP LISTA (left), ADP IFT (center), and \Our{} (right).} 
\label{fig:all_curve}
\end{figure}

Then, we consider deblurring and semi-blind deblurring problems for 2D color images with smoothed total variation (TV) regularization \cite{ChambolleTV}, in place of the elastic-net regularizer considered for the 1D deconvolution problem.
All experiments are conducted on an Apple Silicon M1 Pro chip using PyTorch\footnote{{Code: \url{https://github.com/MohammadSadeghSalehi/Analytical-Deep-Priors}.}}.

\subsection{1D deconvolution}\label{sec:1Ddeconvolution}
We consider a 1D signal deconvolution problem where  $A$ is the forward operator corresponding to the 1D convolution with a Gaussian kernel with standard deviation $\widetilde{\sigma} = 5$. To run the experiments, we work with the full matrix $A$ ({see \cref{fig:1D_reconst} (a)}) rather than its kernel. The corresponding blurry data are additionally corrupted with Gaussian noise with zero mean and standard deviation  $\sigma = 5 \times 10^{-3}$ (cf. green line in \cref{fig:all_curve}). 

For a fair comparison with the experiments in \cite{arndt2022regularization}, we set $\beta = 0$ in~\eqref{eq:ADP-Sobolev} and use the elastic-net regularizer $\mathcal{J}(x) = \alpha_1 \|x\|_1 + \alpha_2 \|x\|_2^2$, with $\alpha_1 = 1.2 \times 10^{-3}$ and $\alpha_2 = 4 \times 10^{-3}$ as in \cite{arndt2022regularization}, for the lower-level problem.
Algorithms 1 and 2 in \cite{arndt2022regularization}, to which we compare \Our{}, are hereafter denoted by ADP IFT and ADP LISTA\footnote{In \cite{arndt2022regularization}, the notation ``LISTA $L = \infty$'' was used to denote unrolling with a high number of iterations. We drop ``$L = \infty$'' for ease of notation.}, respectively.  

Following \cite{arndt2022regularization}, we solve the lower-level problem using the proximal-gradient descent method with step size $\lambda_x = 0.1$, and compute an approximate hypergradient. The first approach, based on IFT as in \cite{arndt2022regularization}, solves the lower-level problem with $500$ iterations to achieve high accuracy; the resulting approximate hypergradient is then used to perform inexact gradient descent. The second approach, LISTA~\cite{gregor2010learning},  solves the lower-level problem with 50 iterations, retaining the computational graph and using automatic differentiation. Additionally, {in both IFT and LISTA,} the initial solution of the lower-level problem for each successive upper-level iteration {is set to} the solution from the previous iteration to enable warm-starting.
In \Our{}, we initialize the accuracy for inexactness to $\epsilon_0 = 10^{-2}$ and set a maximum of $300$ lower-level iterations; however, this iteration limit is never reached in our experiments. The hyperparameters for \Our{} are set as $\overline{\rho} = 1.1$, $\underline{\rho} = 0.5$, $\overline{\tau} = 1.25$, and $\underline{\tau} = 0.5$. Finally, to ensure a fair comparison, the initial upper-level step size $\alpha_0 = \lambda_B = 0.1$ is the same across all three methods, and the same initialization was used for both the lower-level and upper-level problems.

As shown in \cref{fig:all_curve}, both~\Our{} and ADP IFT achieve a similar reconstruction quality with comparable learned parameters $B^*$ ({cf.~also \cref{fig:1D_reconst}}), while ADP LISTA results in a slightly higher error. However, the computational cost and CPU time, shown in \cref{fig:1D_compare_cost_time}, highlight a significant difference. In terms of CPU time, \Our{} requires approximately half the time of ADP LISTA  and only a quarter of the time required by ADP IFT with high fixed accuracy. 
Although ADP LISTA converged to a suboptimal stationary point, possibly due to insufficient lower-level accuracy in early upper-level iterations, \Our{} {reached a better stationary point} with significantly less computational cost than ADP IFT. 
{\cref{fig:1D_reconst} shows the initial convolutional operator $A=B_0$, along with the convolutional operator $B^*$ obtained by solving the ADP-$\beta$ problem applied to 1D deconvolution, for all three methods, corresponding to the reconstructions in \cref{fig:all_curve}.}

\begin{figure}[h!]
 \centering
    \begin{tabular}{@{}c@{\quad}c@{\quad}c@{\quad}c@{}}
\includegraphics[width=0.23\textwidth]{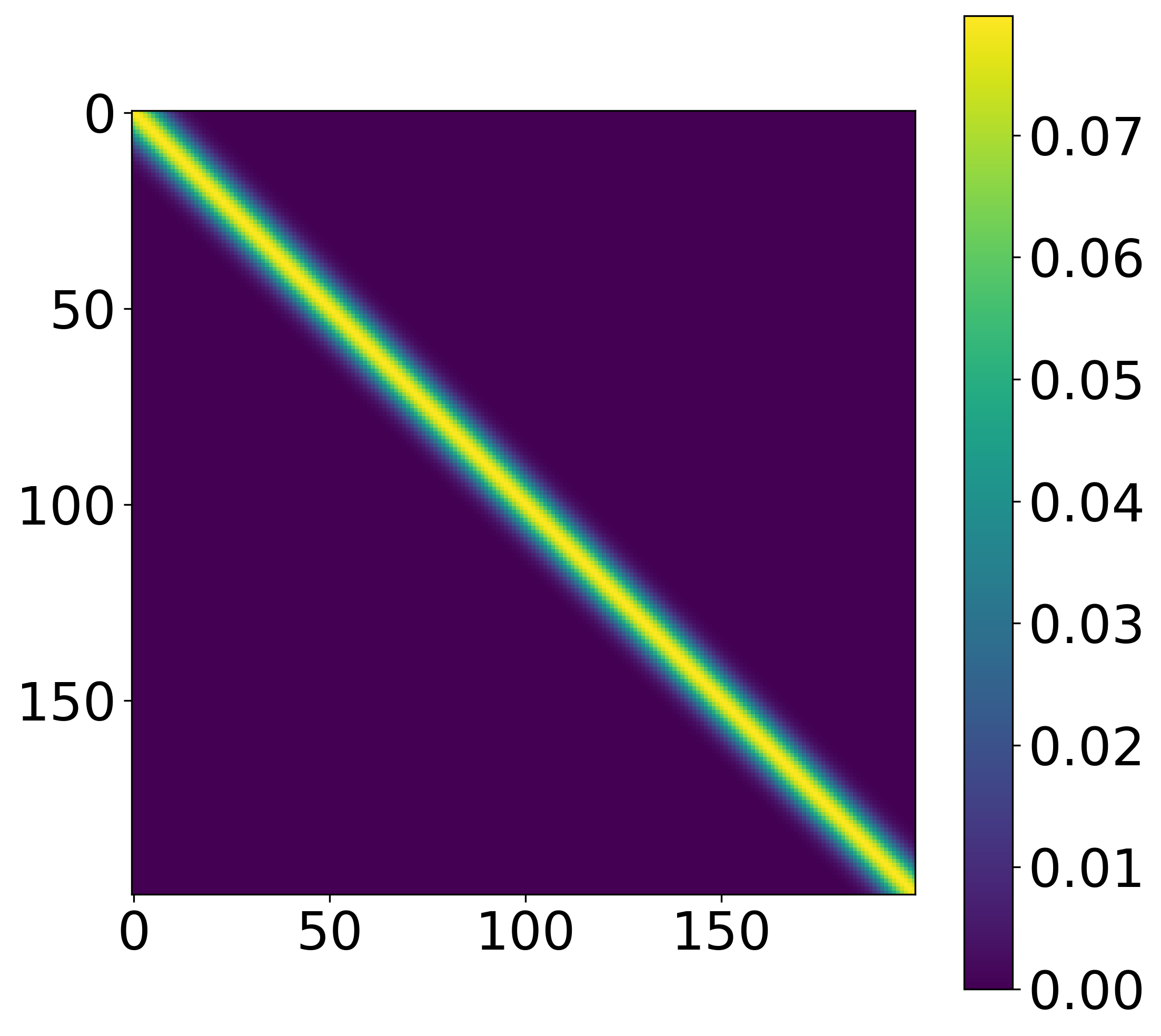}\label{full_matrix}
& \includegraphics[width=0.23\textwidth]{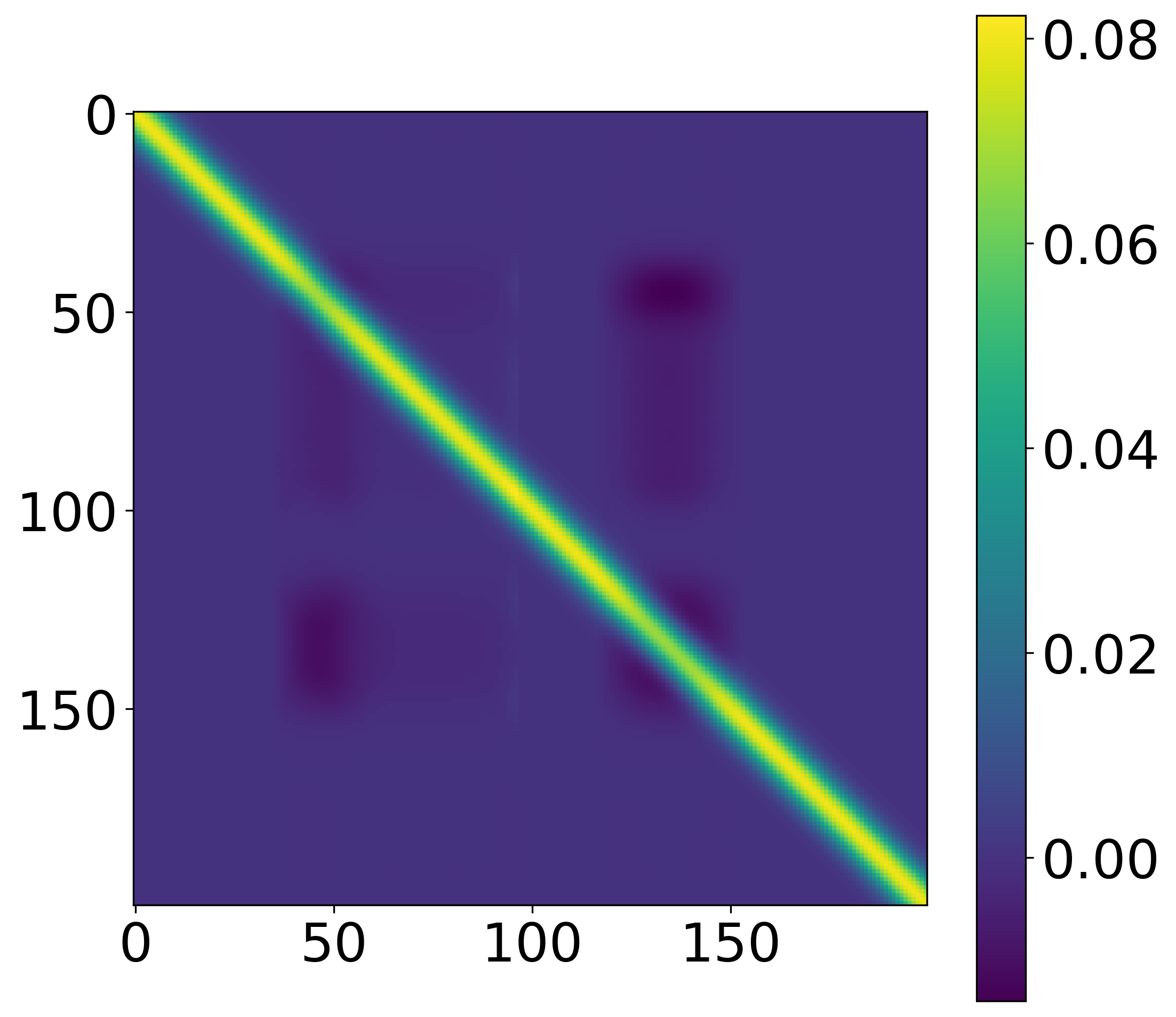}
& \includegraphics[width=0.23\textwidth]{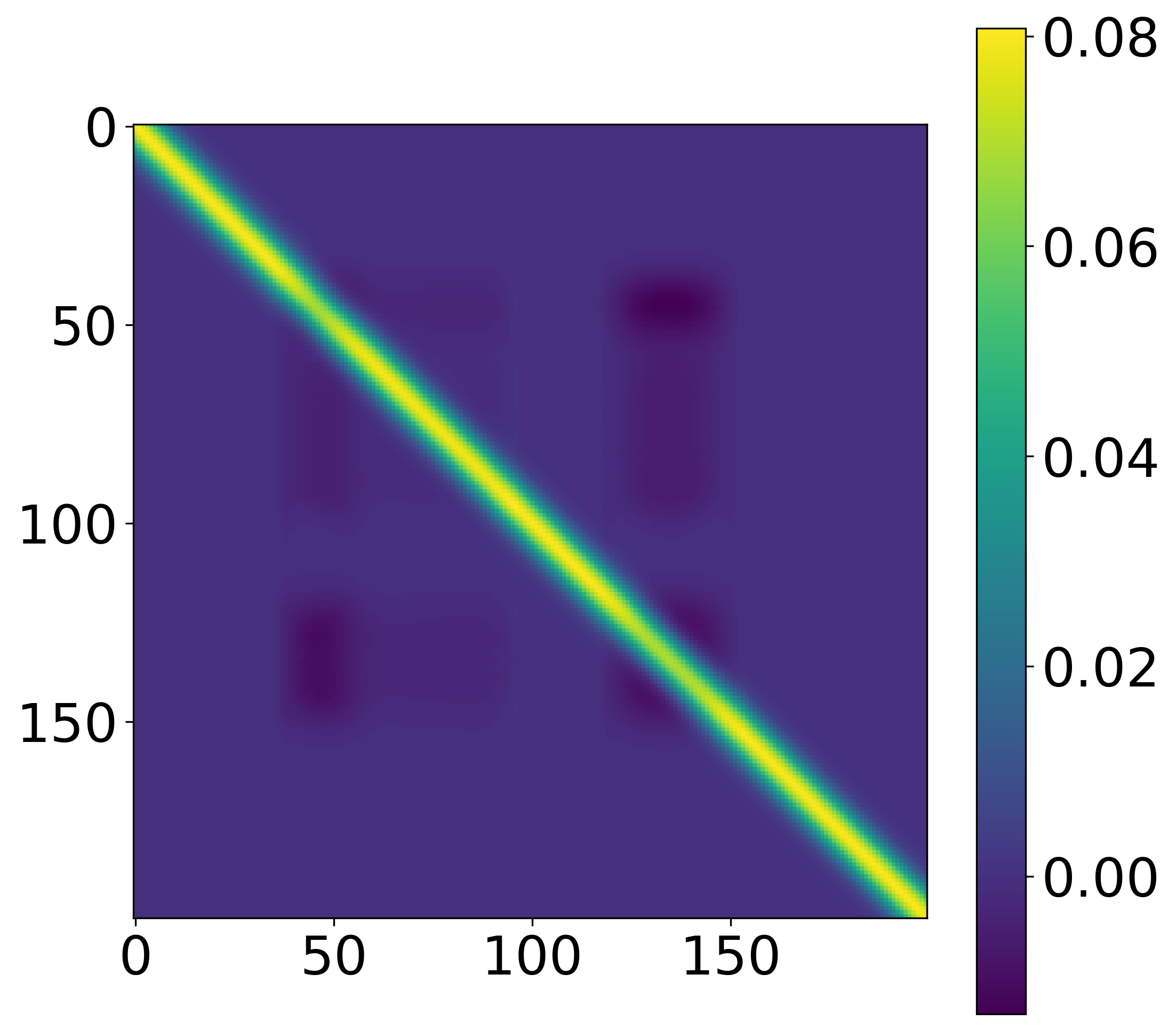}
& \includegraphics[width=0.23\textwidth]{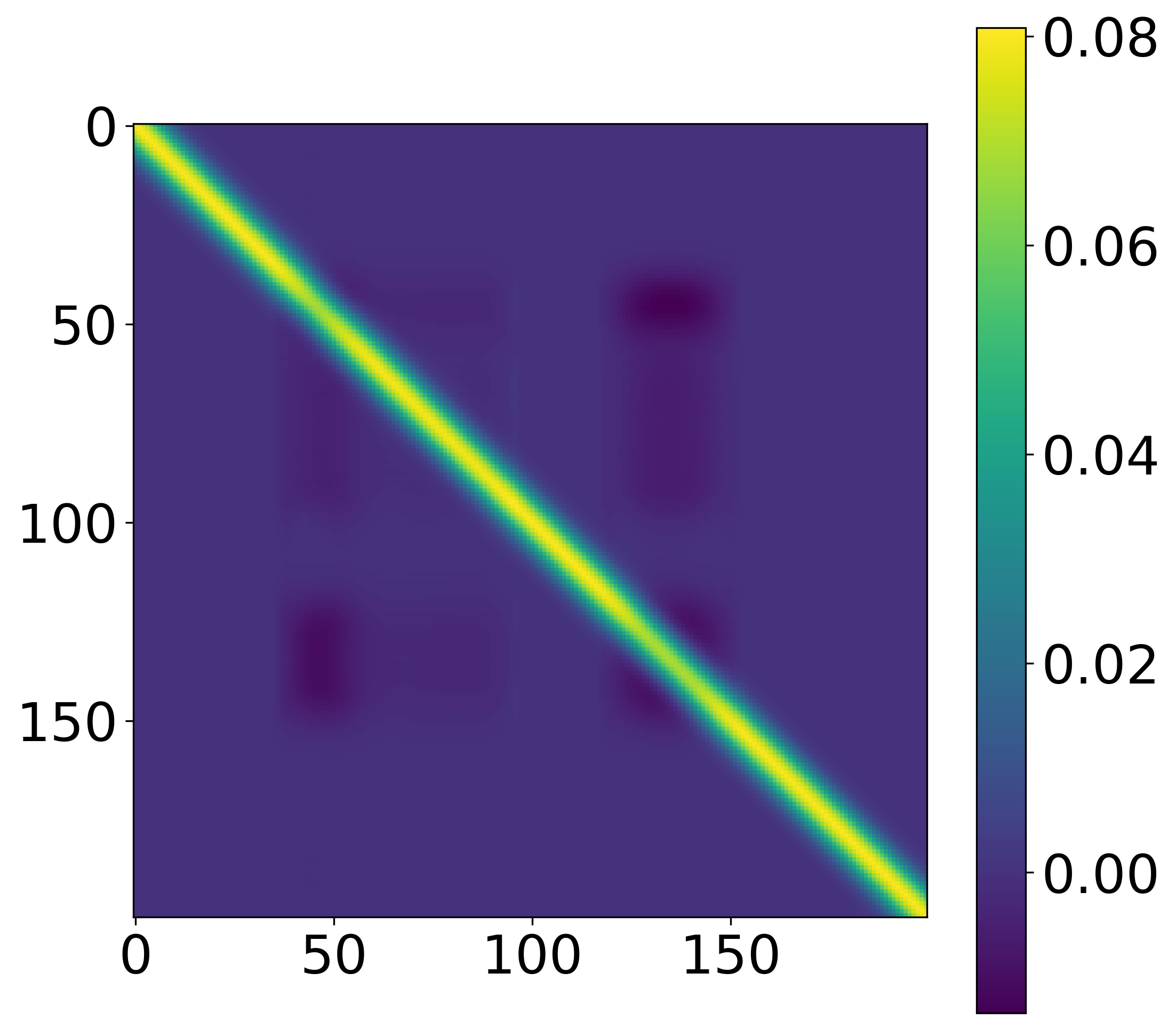} \\
(a) $A=B_0$ & (b) ADP LISTA & (c) ADP IFT & (d) \Our{}
\end{tabular}
\caption{Forward operator in 1D deconvolution. (a) Initial operator $A=B_0$ (Gaussian kernel); $B^*$ with  (b) ADP LISTA, (c) ADP IFT, and (d) \Our{}.}
\label{fig:1D_reconst}
\end{figure}

\begin{figure}[t!]
    \centering
    \begin{tabular}{@{}c@{\qquad}c@{}}
\includegraphics[width=0.35\textwidth]{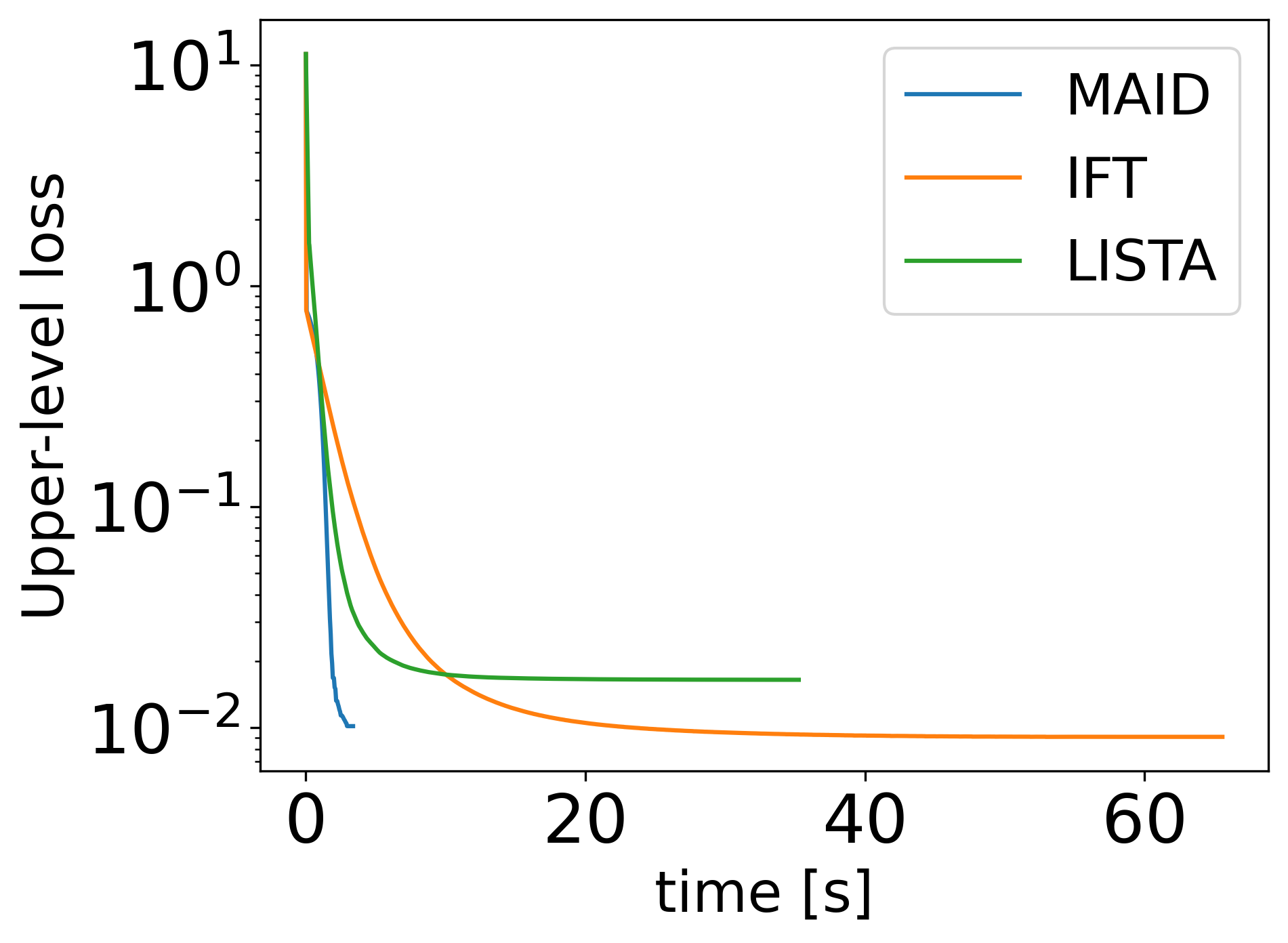}  
& \includegraphics[width=0.35\textwidth]{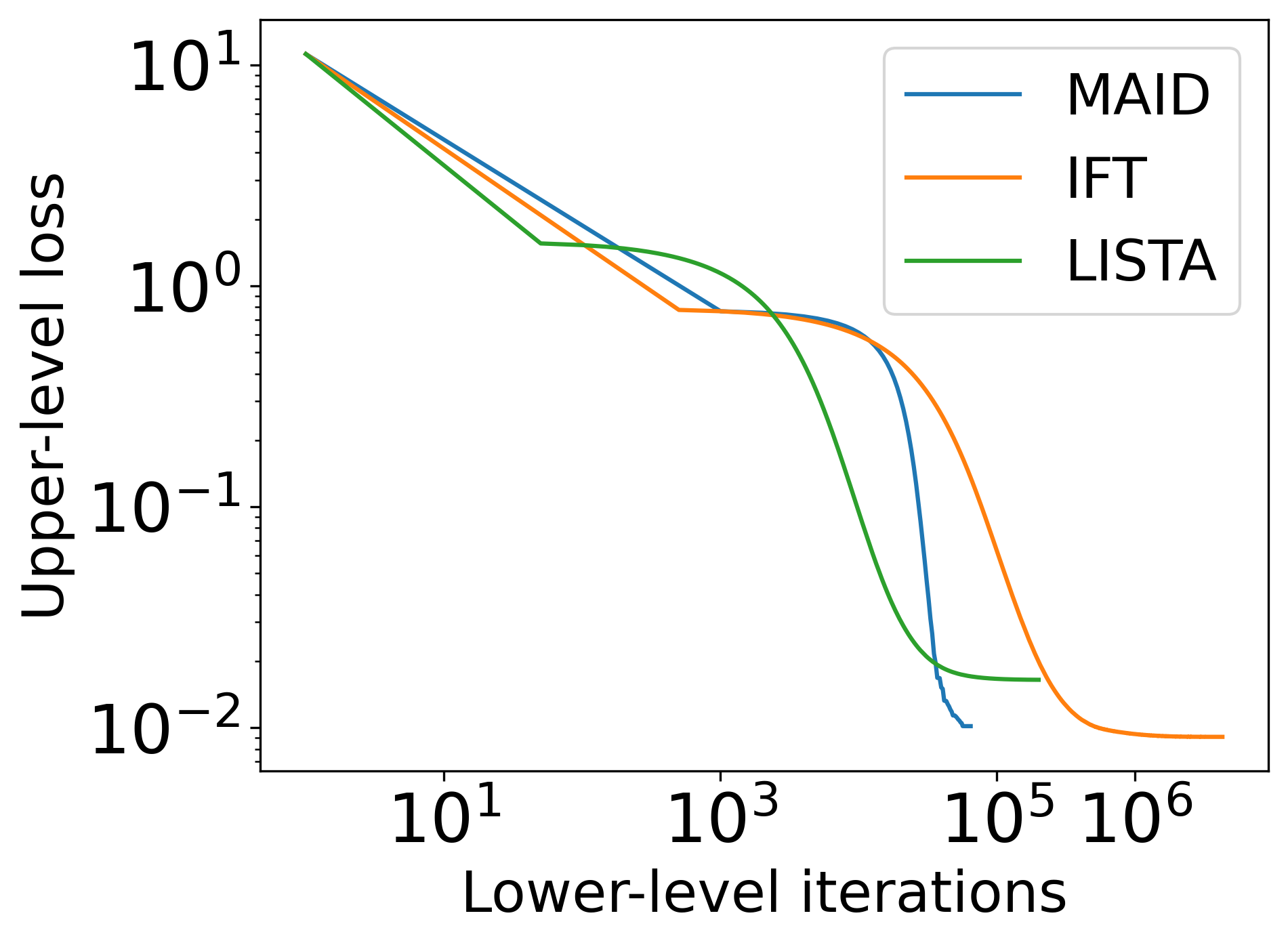}  
\end{tabular}
\caption{Comparison of ADP IFT, ADP LISTA and \Our{} in terms of upper-level loss as a function of wall-clock time and lower-level iterations.}
\label{fig:1D_compare_cost_time}  
\end{figure}

\subsection{2D deconvolution and semi-blind deconvolution}
In this section, we implement {ADP-$\beta$}
with Sobolev regularization for the upper-level (i.e., \eqref{eq:ADP-Sobolev} with $\beta \not=0$), and 2D smoothed TV for the lower-level regularizer, i.e. $\mathcal{J}(x) = \| \nabla x\|_\nu$, where $\|x\|\nu = \sum_{i=1}^N \sqrt{x_i^2 + \nu^2} - \nu$ with $\nu = 10^{-4}$. Given the higher dimensionality of the problem, in the implementation, we take a kernel $b_0$ instead of the entire convolution operator $B_0$ as the initial kernel. 
We consider color images of size $300 \times 400 \times 3$. The regularization parameters for the lower-level and upper-level problems are set as $\alpha = 0.6$ and $\beta = 0.3$, respectively. The value of $\alpha$ and $\beta$ are determined {using grid search}. {To solve ADP-$\beta$, we set the same parameters for \Our{} as in~\cref{sec:1Ddeconvolution}. We use the {Accelerated Gradient Descent Algorithm (AGD)} for smooth functions~\cite[Algorithm 3]{Chambolle_Pock_2016}, to solve the lower-level problem. In step \ref{CG_step} of \cref{Inexact_grad}, we use CG with tolerance $\delta = \epsilon$, where $\epsilon$ denotes the accuracy of the lower-level solution. }

\smallskip 
\textbf{Motion Blur.}
In this case,  $A$ is the 2D convolution operator associated with a motion kernel of size $5\times5$, with non-zero diagonal elements (cf.~$b_0$ in {\cref{fig:kernel_motion}}). The corresponding blurry image is corrupted with additional Gaussian noise with zero mean and standard deviation $\sigma = 0.02$. 

To solve ADP-$\beta$, we set the same parameters for \Our{} as in~\cref{sec:1Ddeconvolution}. We use the Fast Iterative Shrinkage-Thresholding Algorithm (FISTA)  for strongly convex and smooth functions~\cite[Algorithm 5]{Chambolle_Pock_2016}, to solve the lower-level problem. In step \ref{CG_step} of \cref{Inexact_grad}, we use CG with tolerance $\delta = \epsilon$, where $\epsilon$ denotes the accuracy of the lower-level solution. To satisfy the assumptions of~\cref{Inexact_grad}, we use smoothed TV as the lower-level regularizer, i.e., $\reg_\nu(x) = \| \nabla x\|_\nu$, where $\|x\|\nu = \sum_{i=1}^N \sqrt{x_i^2 + \nu^2} - \nu$ with $\nu = 10^{-4}$.

The Sobolev norm $\|B\|_{H^1}^2 = C (\|b\|_{L_2}^2 + \|\nabla_x b\|_{L_2}^2 + \|\nabla_y b\|_{L_2}^2)$ in \eqref{upper_Sobolev}, where $\nabla_x$ and $\nabla_y$ denote the forward difference operators in the horizontal and vertical directions, respectively, and $C > 0$ is some constant absorbed into the regularization parameter,  is computed using the kernel $b$ associated with $B$, that is,  $\|b\|_{H^1}^2 = \|\nabla b\|_2^2$, where $\nabla$ is the forward difference operator. 
The upper-level iterations are stopped  when the accuracy in \Our{} reaches $\epsilon = 10^{-6}$. 

As illustrated in \cref{fig:all}, the deblurred images obtained by solving the ADP-$\beta$ bilevel problem using \Our{} are superior, both visually and in terms of quality metrics, to the results obtained by solving only the lower-level variational problem with the initial kernel $b_0$. PSNR and SSIM values for all reconstructions are reported on top of each image in \cref{fig:all}. By way of example, we report in {\cref{fig:kernel_motion}} the first channel of the kernel $b^*$ obtained by solving the ADP-$\beta$ problem, along with the absolute difference $|b^* - b_0|$, corresponding to the reconstructions in the first row of \cref{fig:all}. These results indicate off-diagonal changes in the initial kernel. 

\begin{figure}[h!]
\centering
\begin{tabular}{@{}c@{\quad}c@{\quad}c@{}}
\includegraphics[width=0.3\textwidth]{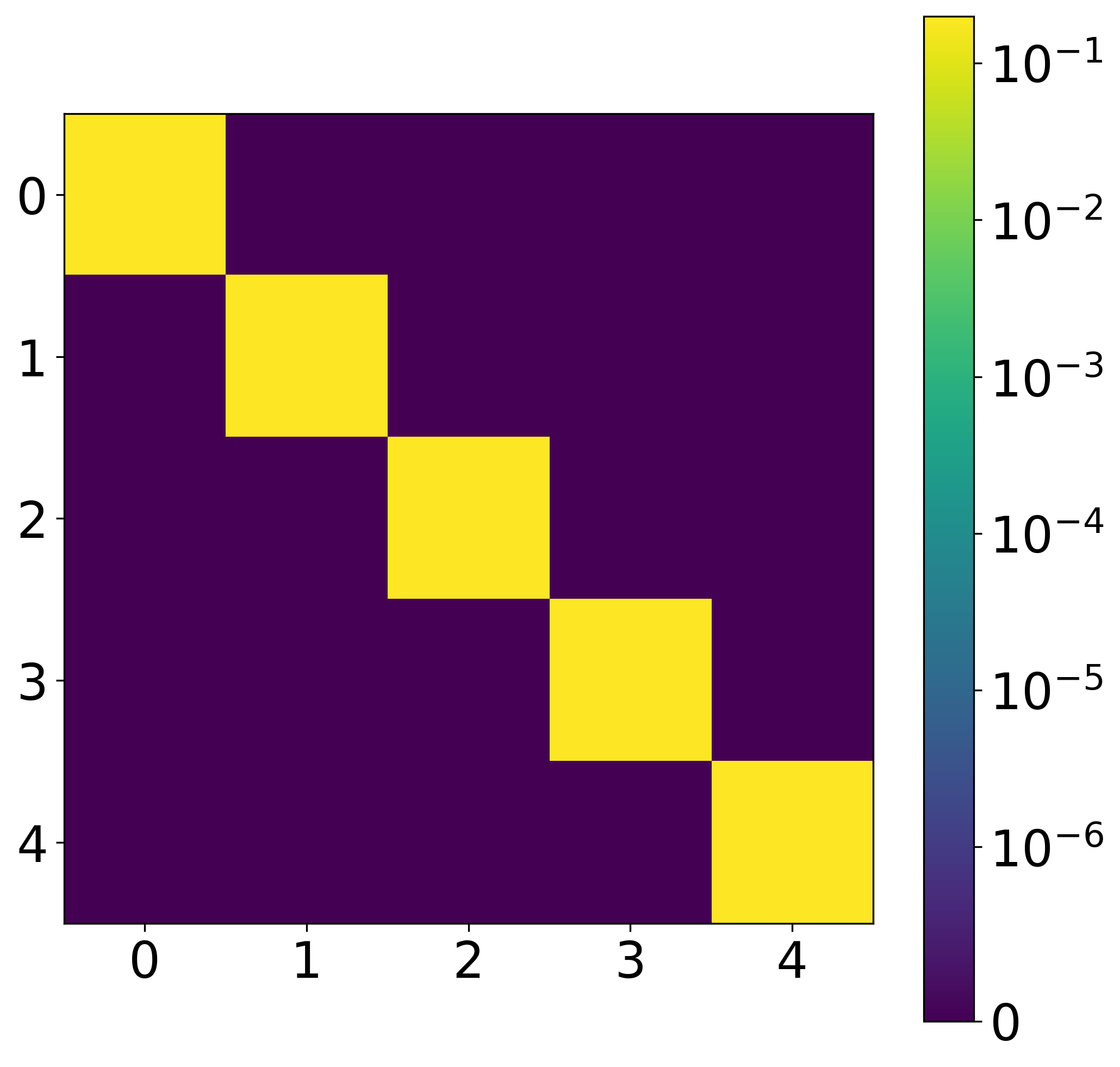}     
&  \includegraphics[width=0.3\textwidth]{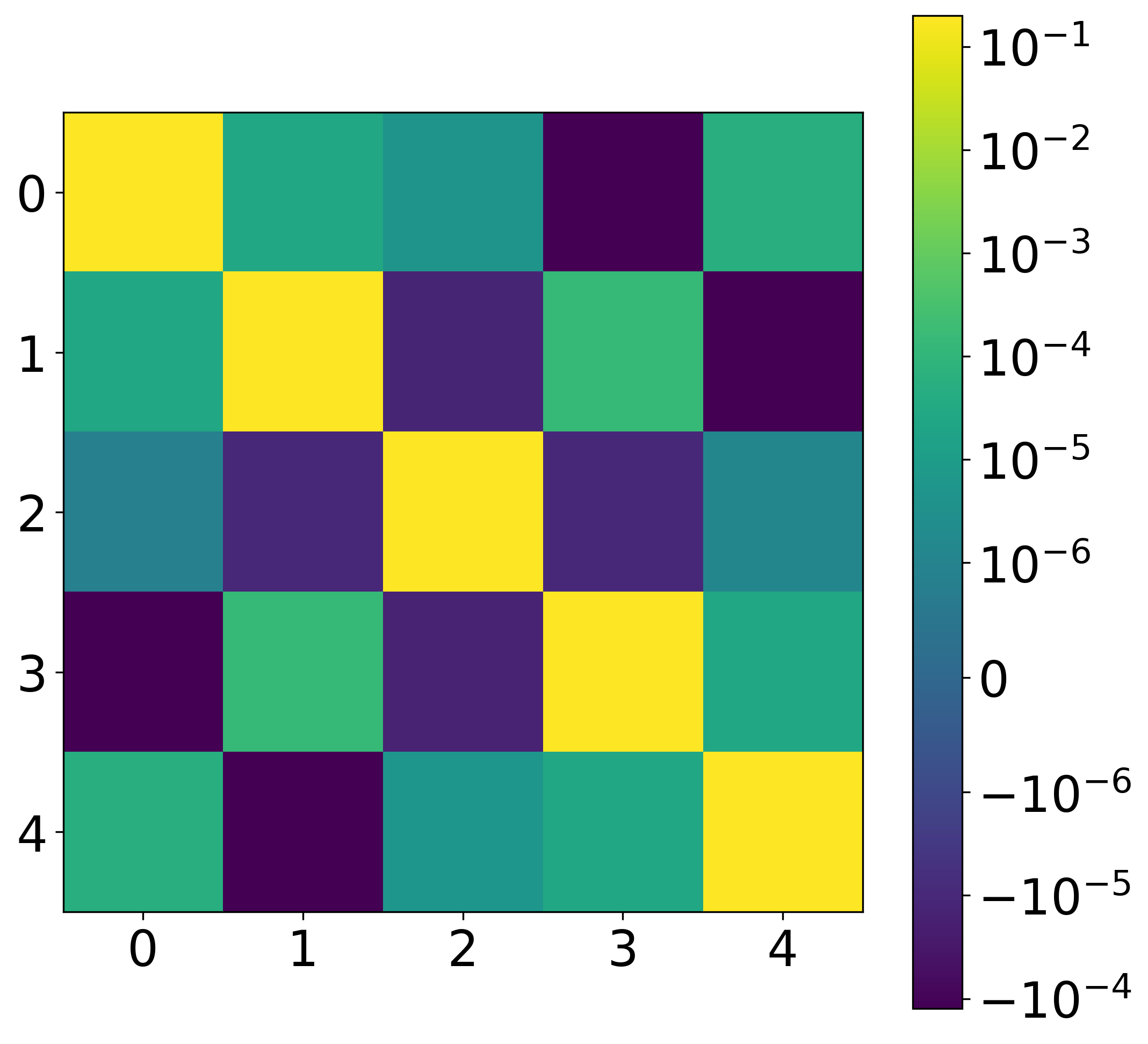}
& \includegraphics[width=0.3\textwidth]{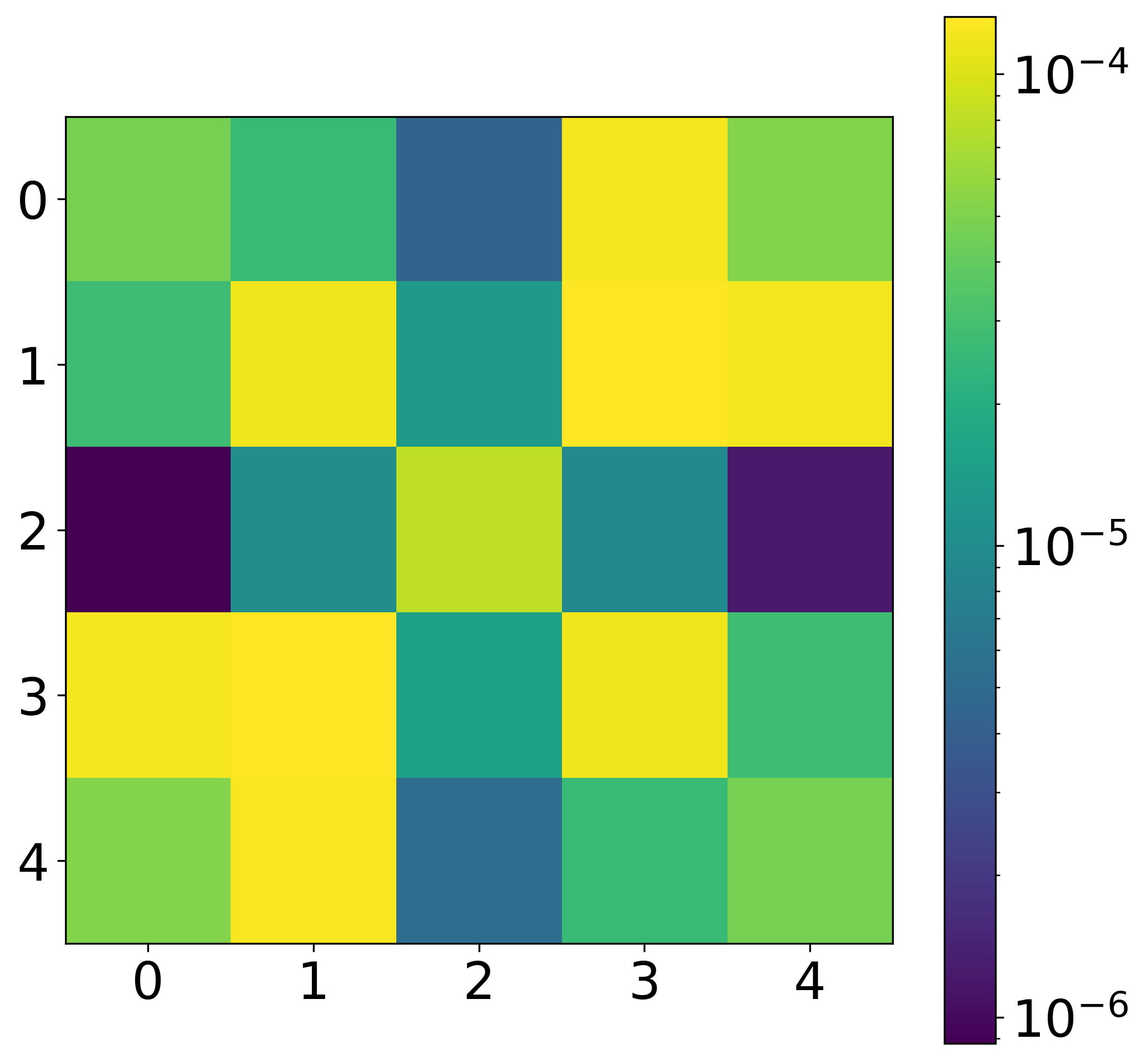} \\
\footnotesize{(a) Initial kernel $b_0$}  & 
\footnotesize{(b) Kernel $b^*$} &
\footnotesize{(c) $| b^* - b_0|$}
\end{tabular}
\caption{Kernels (first channel) of the 2D motion blur forward operator. (a) Initial kernel $b_0$. (b) Optimal kernel $b^*$ recovered by \Our{}. (c) Difference $|b^*-b_0|$.}
    \label{fig:kernel_motion}
\end{figure}

\begin{figure}[t!]
  \centering
  \begin{subfigure}{0.24\textwidth}
    \includegraphics[width=\linewidth]{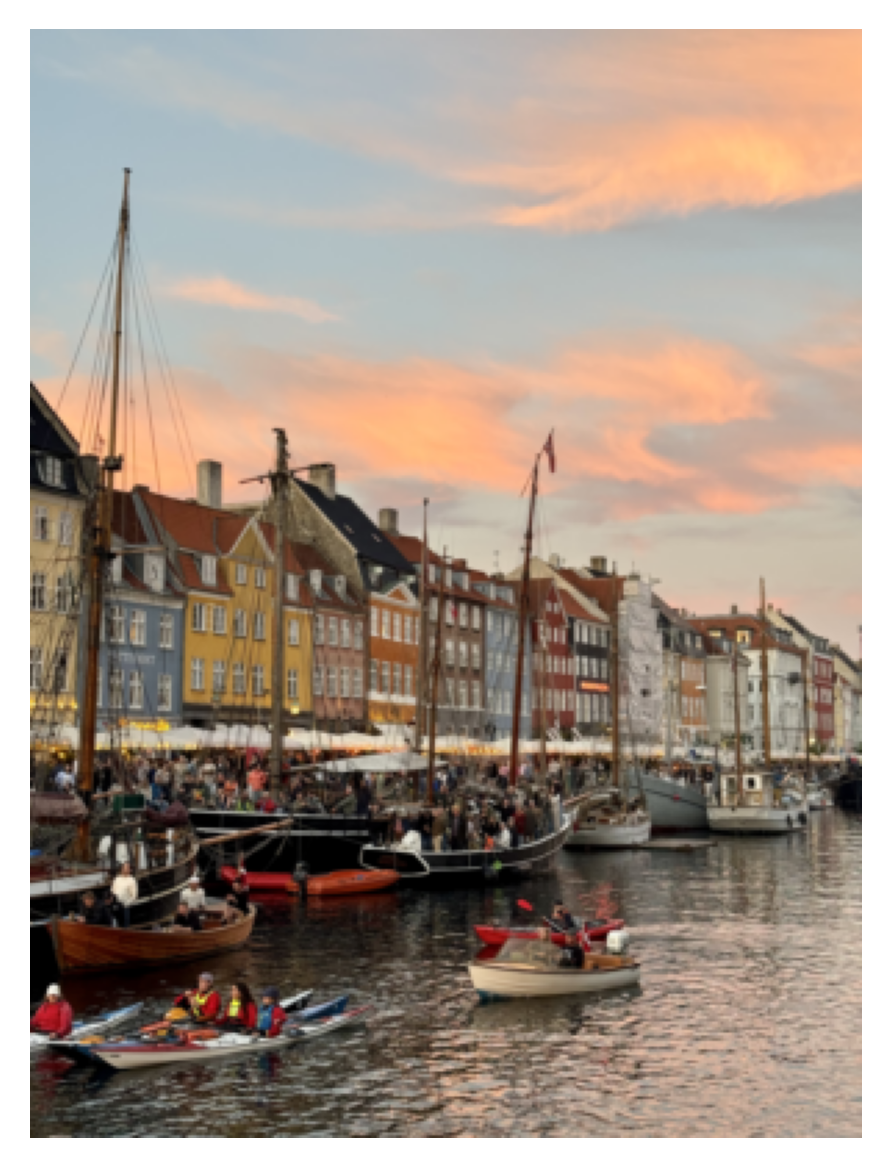}
  \end{subfigure}
  \hfill
  \begin{subfigure}{0.24\textwidth}
    \includegraphics[width=\linewidth]{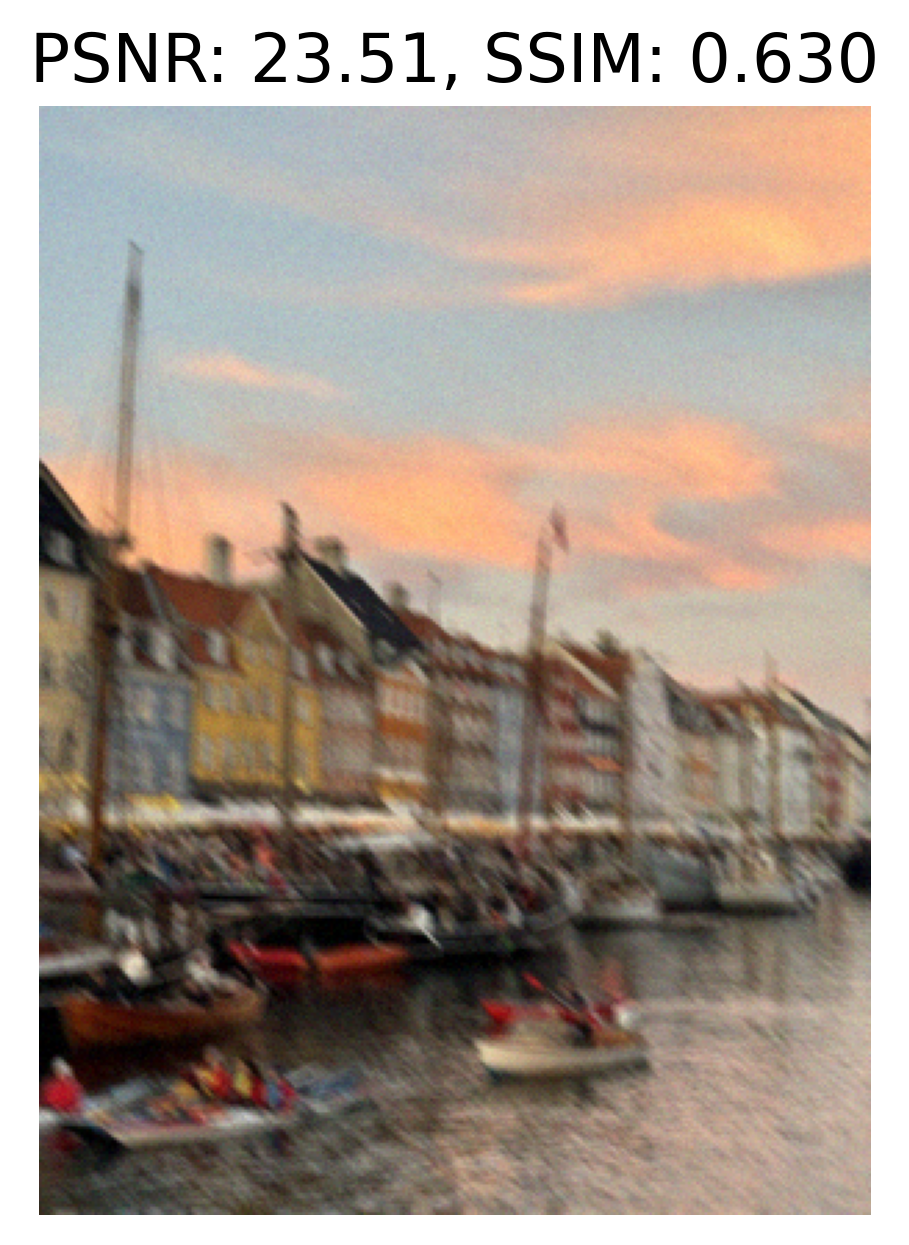}
  \end{subfigure}
  \hfill
  \begin{subfigure}{0.24\textwidth}
    \includegraphics[width=\linewidth]{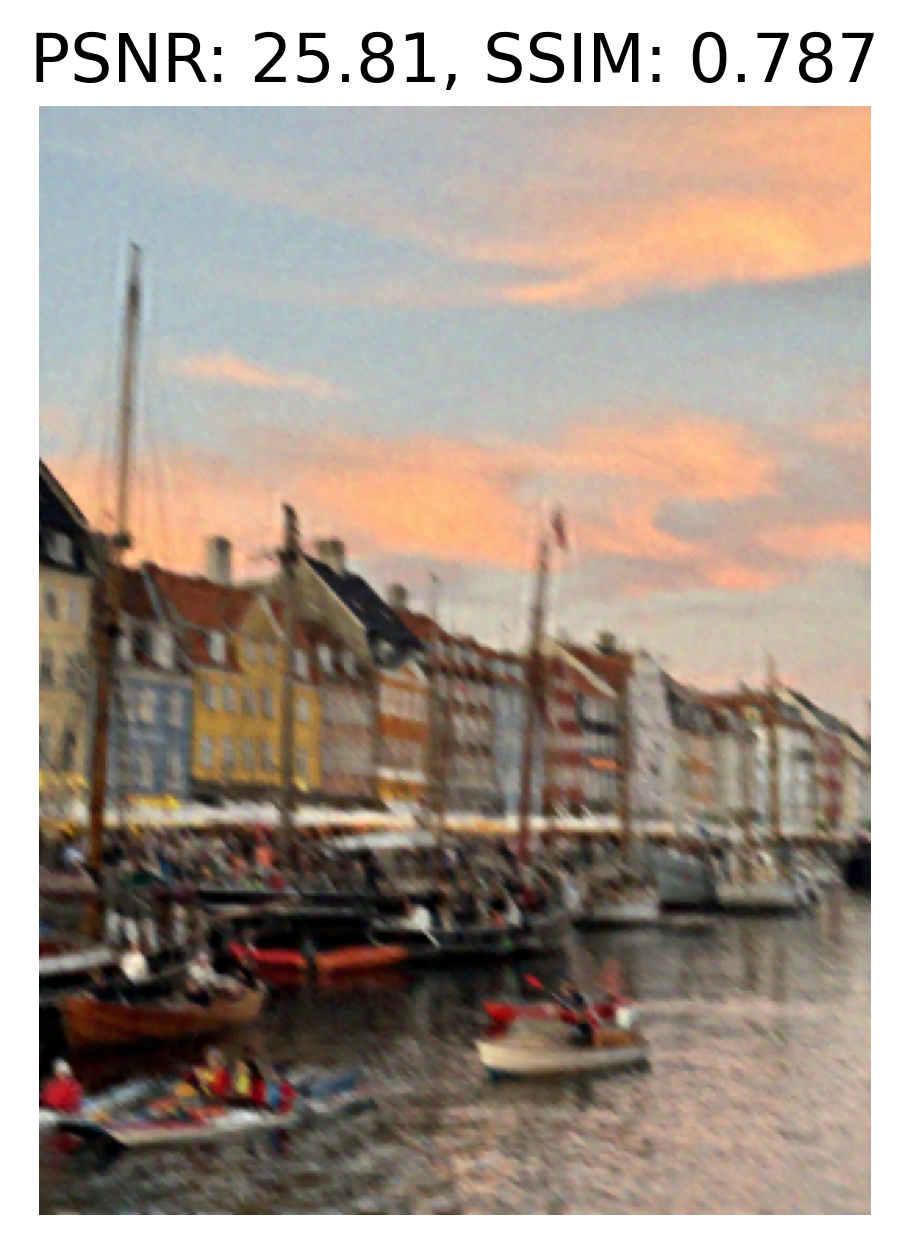}
  \end{subfigure}
  \hfill
  \begin{subfigure}{0.24\textwidth}
    \includegraphics[width=\linewidth]{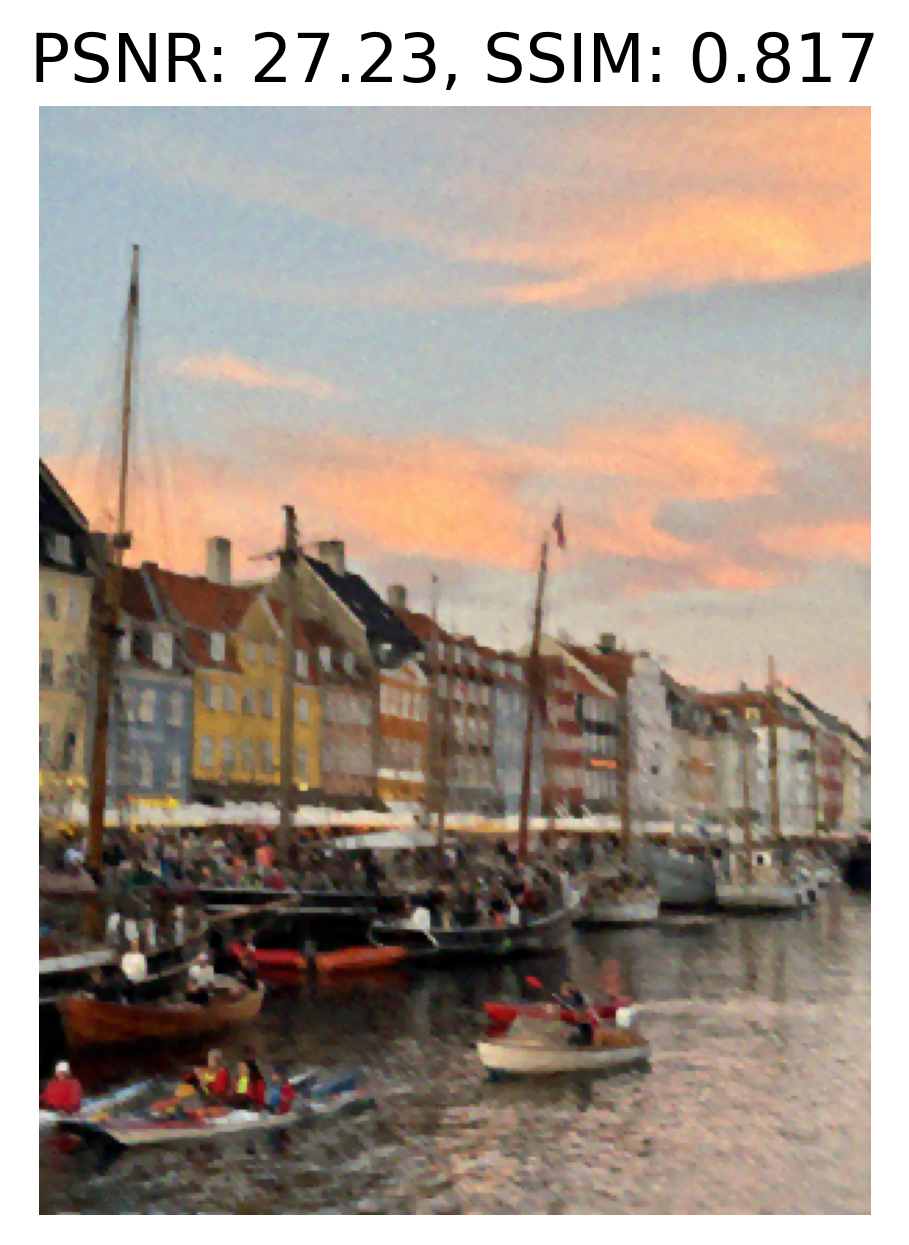}
  \end{subfigure}
  \vspace{1pt}  

  \begin{subfigure}{0.24\textwidth}
    \includegraphics[width=\linewidth]{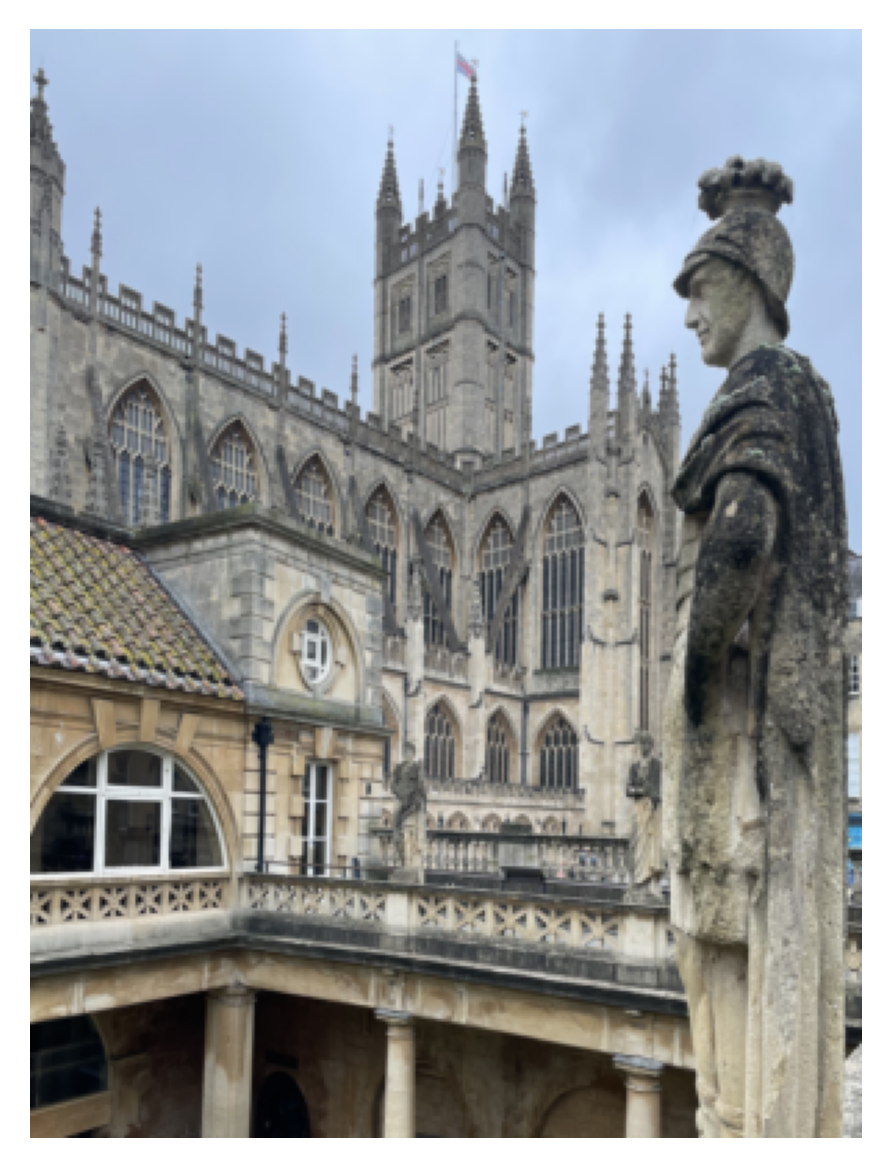}
  \end{subfigure}
  \hfill
  \begin{subfigure}{0.24\textwidth}
    \includegraphics[width=\linewidth]{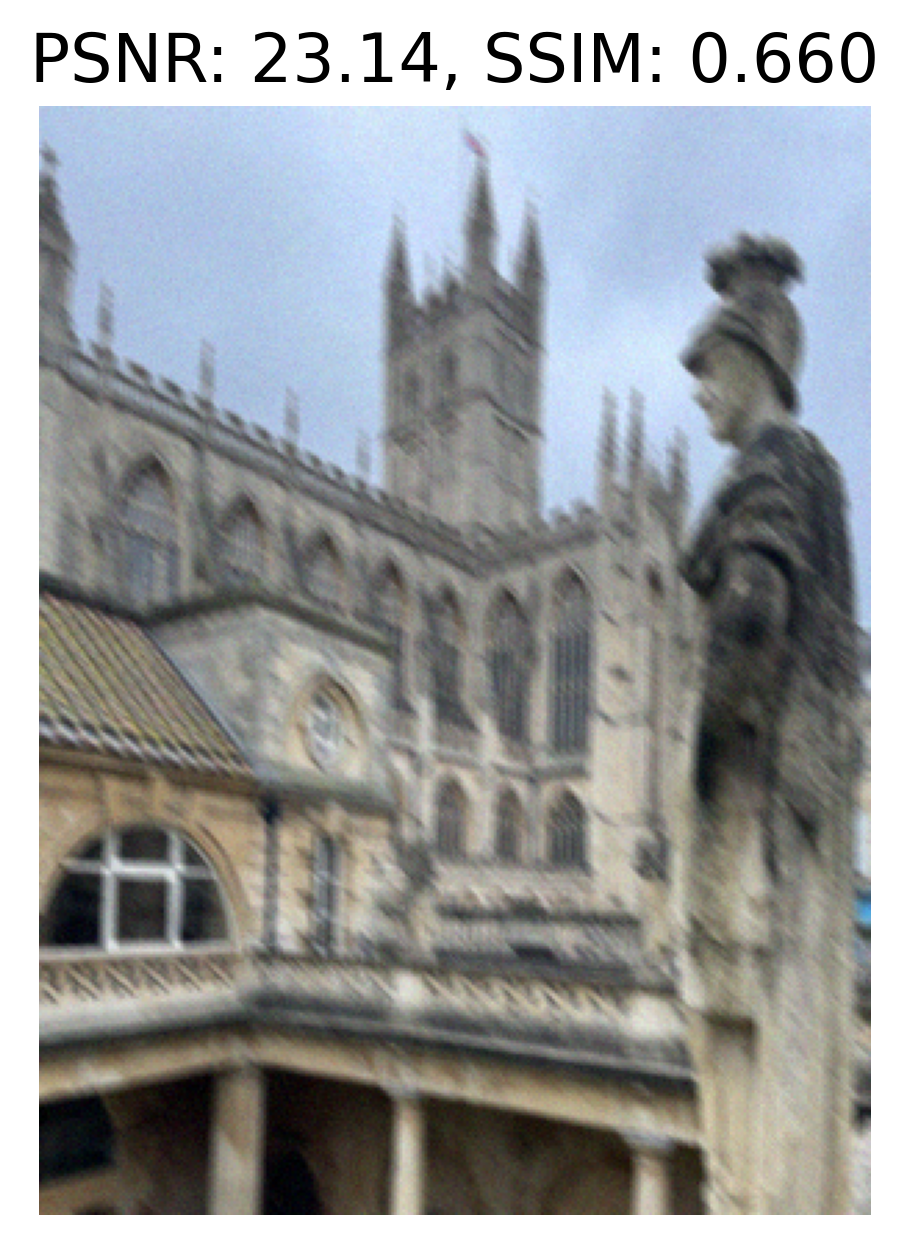}
  \end{subfigure}
  \hfill
  \begin{subfigure}{0.24\textwidth}
    \includegraphics[width=\linewidth]{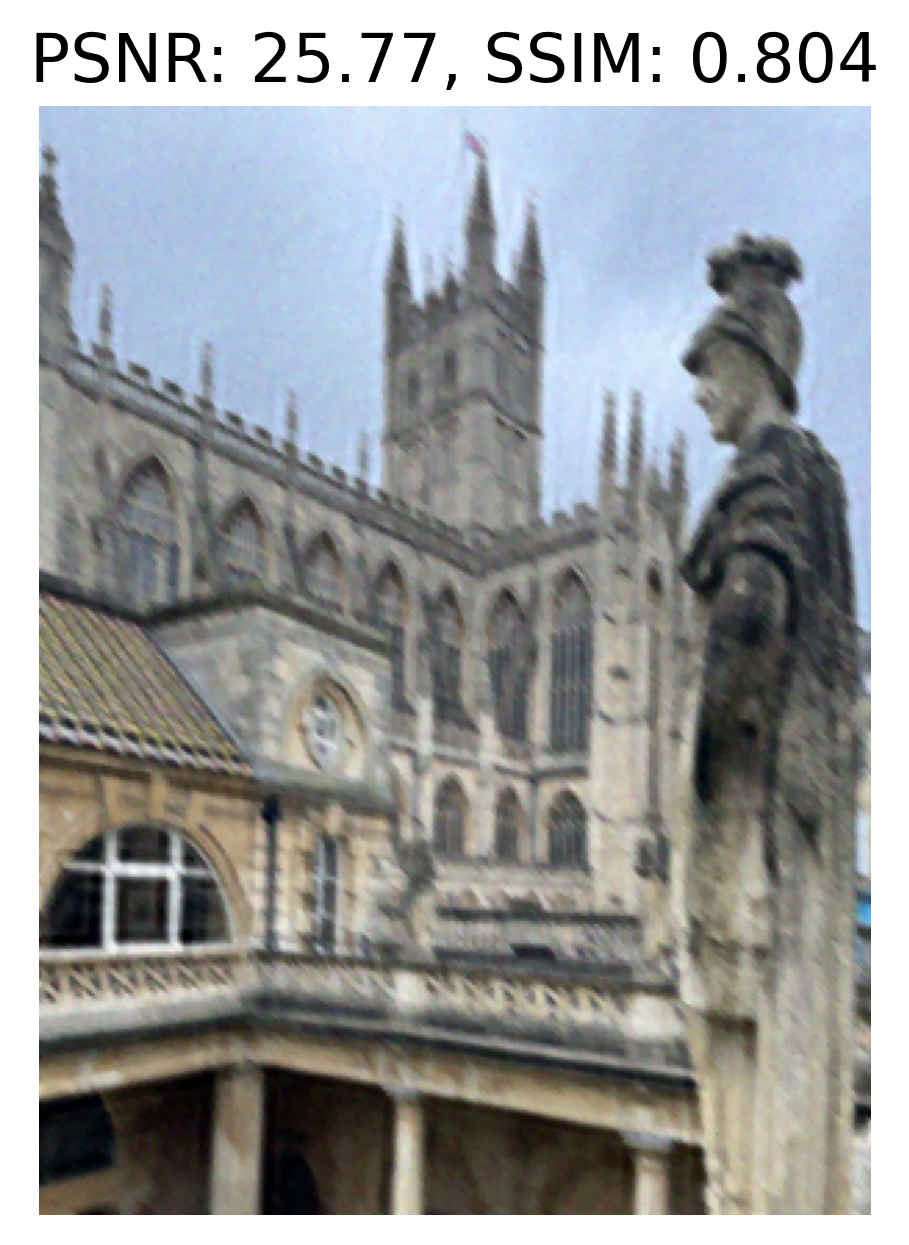}
  \end{subfigure}
  \hfill
  \begin{subfigure}{0.24\textwidth}
    \includegraphics[width=\linewidth]{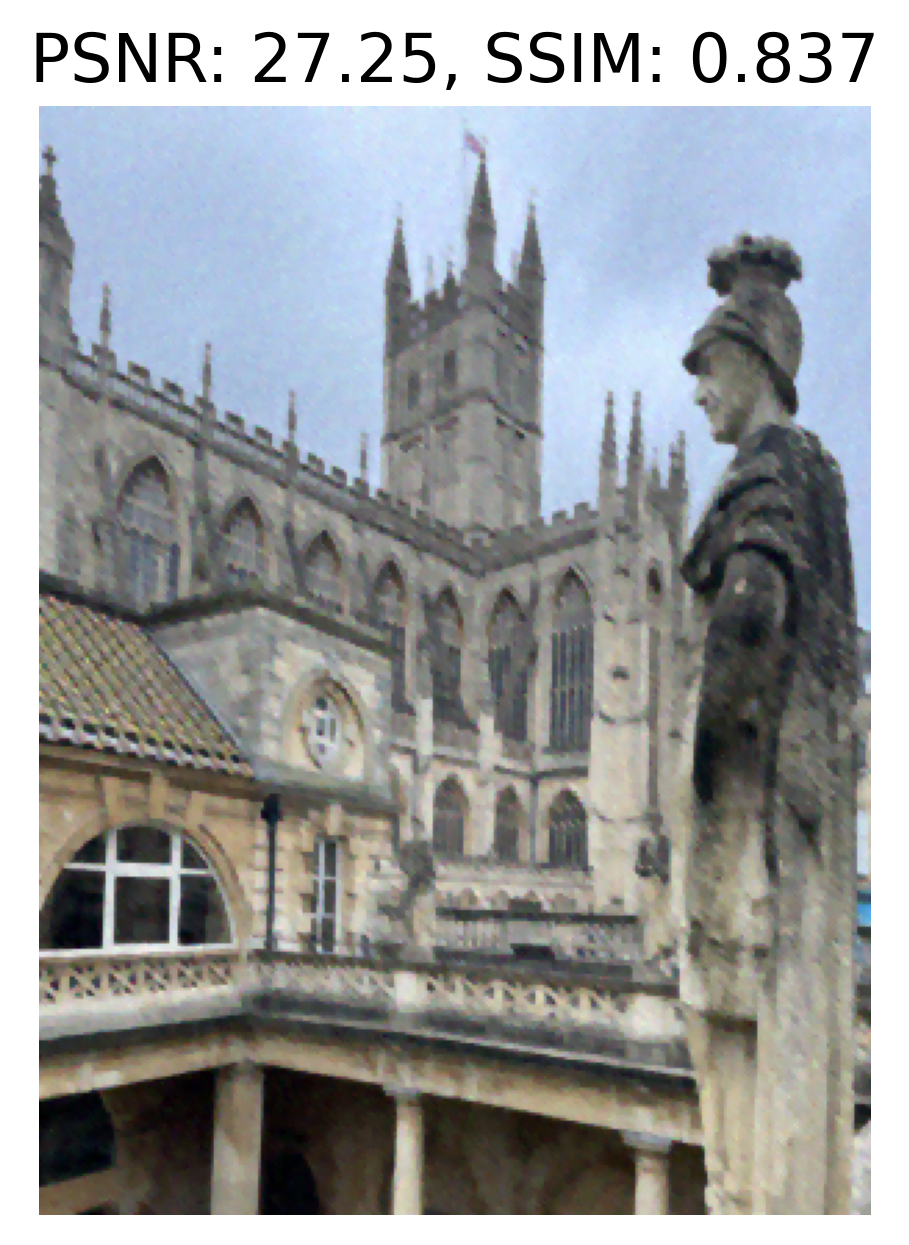}
  \end{subfigure}
  \vspace{1pt}  
  \begin{subfigure}{0.24\textwidth}
    \includegraphics[width=\linewidth]{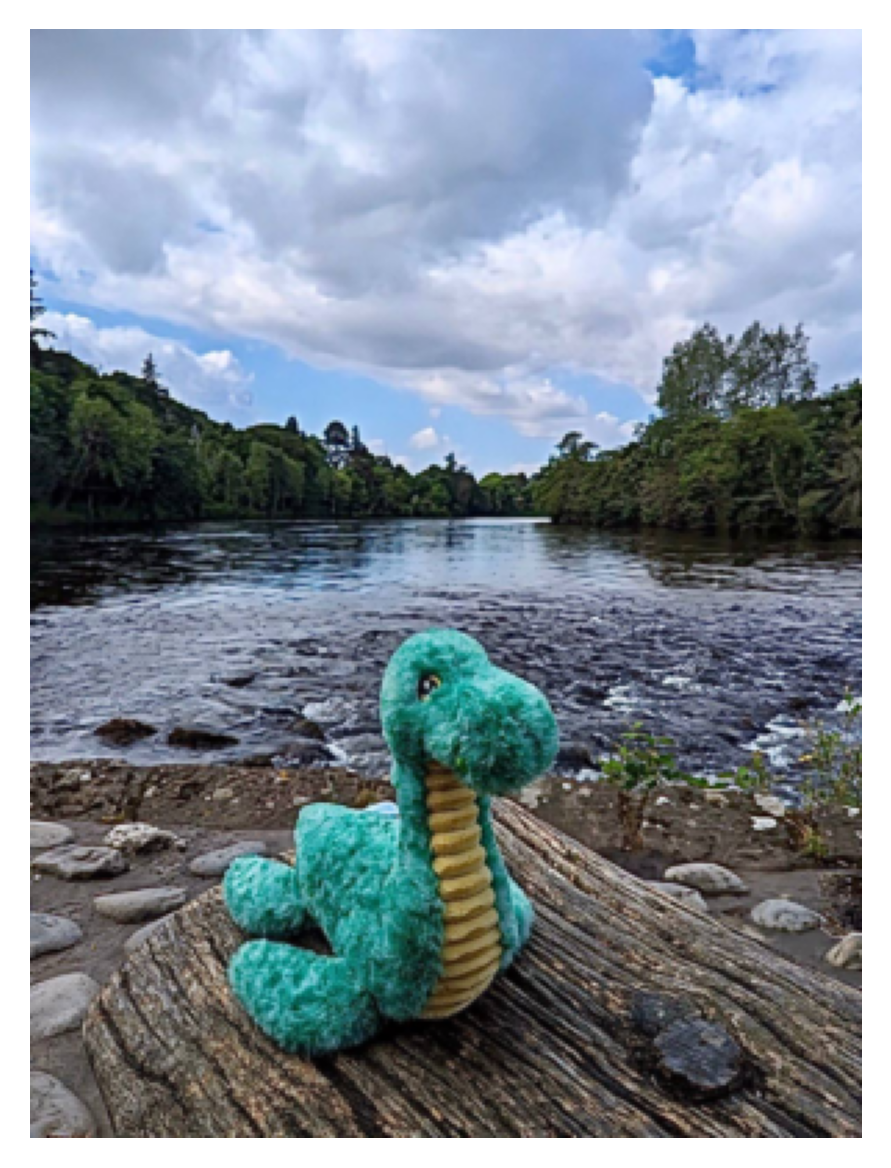}
    \caption{Ground truth}
    \label{fig:image_m}
  \end{subfigure}
  \hfill
  \begin{subfigure}{0.24\textwidth}
        \includegraphics[width=\linewidth]{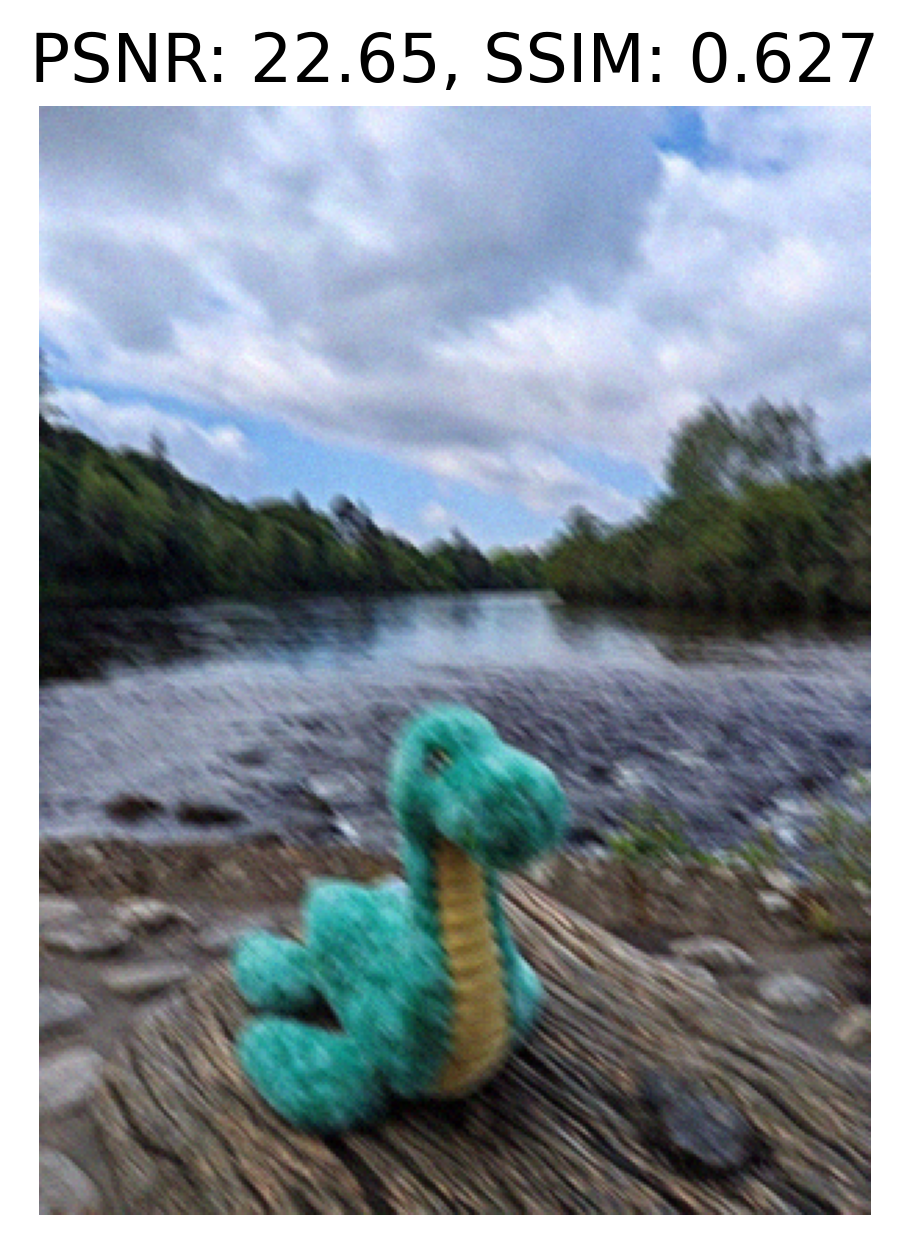}
    \caption{Blurry and noisy}
    \label{fig:blur_m}
  \end{subfigure}
  \hfill
  \begin{subfigure}{0.24\textwidth}
    \includegraphics[width=\linewidth]{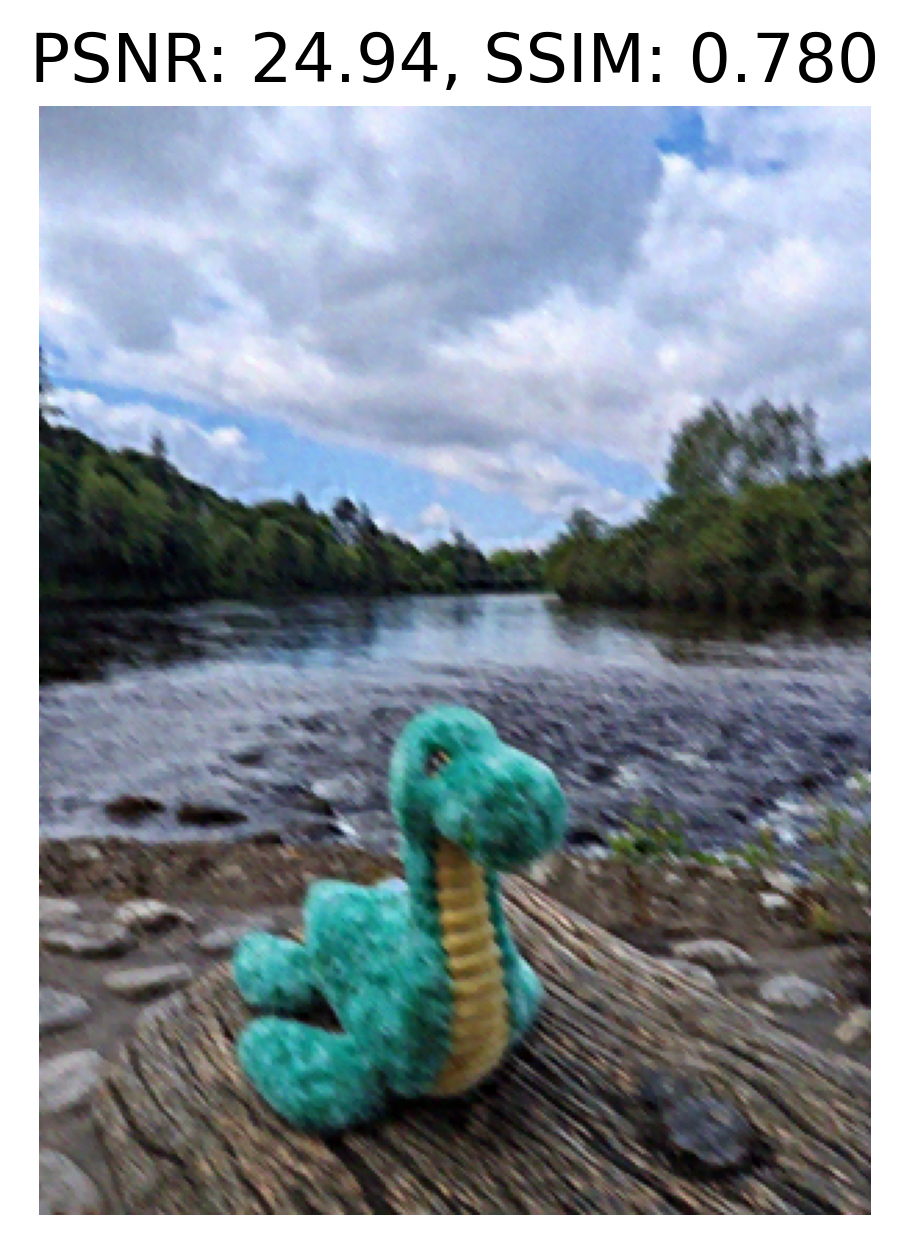}
    \caption{TV}
    \label{fig:TV_m}
  \end{subfigure}
  \hfill
  \begin{subfigure}{0.24\textwidth}
    \includegraphics[width=\linewidth]{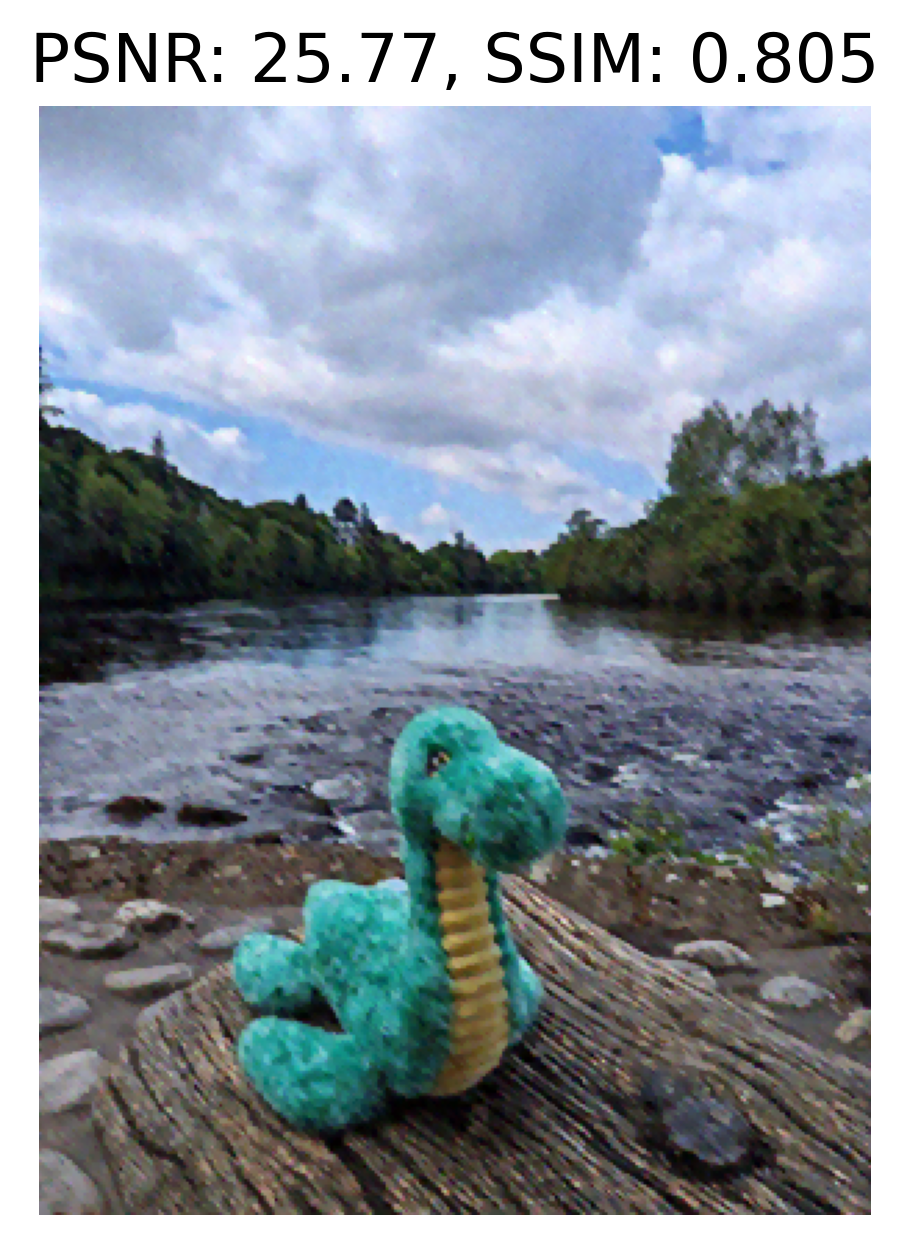}
    \caption{\Our{}}
    \label{fig:ADP_m}
  \end{subfigure}
  \caption{Reconstructions in deblurring with a motion blur 
  }
  \label{fig:all}
\end{figure}

\smallskip 

\textbf{Semi-blind deblurring.}
In this case, we consider two different blur operators, $A_1$ and $A_2$, with corresponding kernels $\widehat{b}_1$ and $\widehat{b}_2$, representing motion blur and Gaussian blur, respectively. The diagonal motion kernel is defined as in the previous section, while for the Gaussian blur, we use a $5 \times 5$ kernel with $\widetilde{\sigma} = 0.5$. Since only the motion blur is assumed to be known, we set $b_0$ as the motion blur kernel $\widehat{b}_1$, as shown in {\cref{fig:kernel_semiblind}}. Moreover, additive Gaussian noise with $\sigma = 0.02$ is added to obtain the blurry and noisy image.
\begin{figure}[t!]
  \centering
  \begin{subfigure}[t]{0.24\textwidth}
    \includegraphics[width=\linewidth]{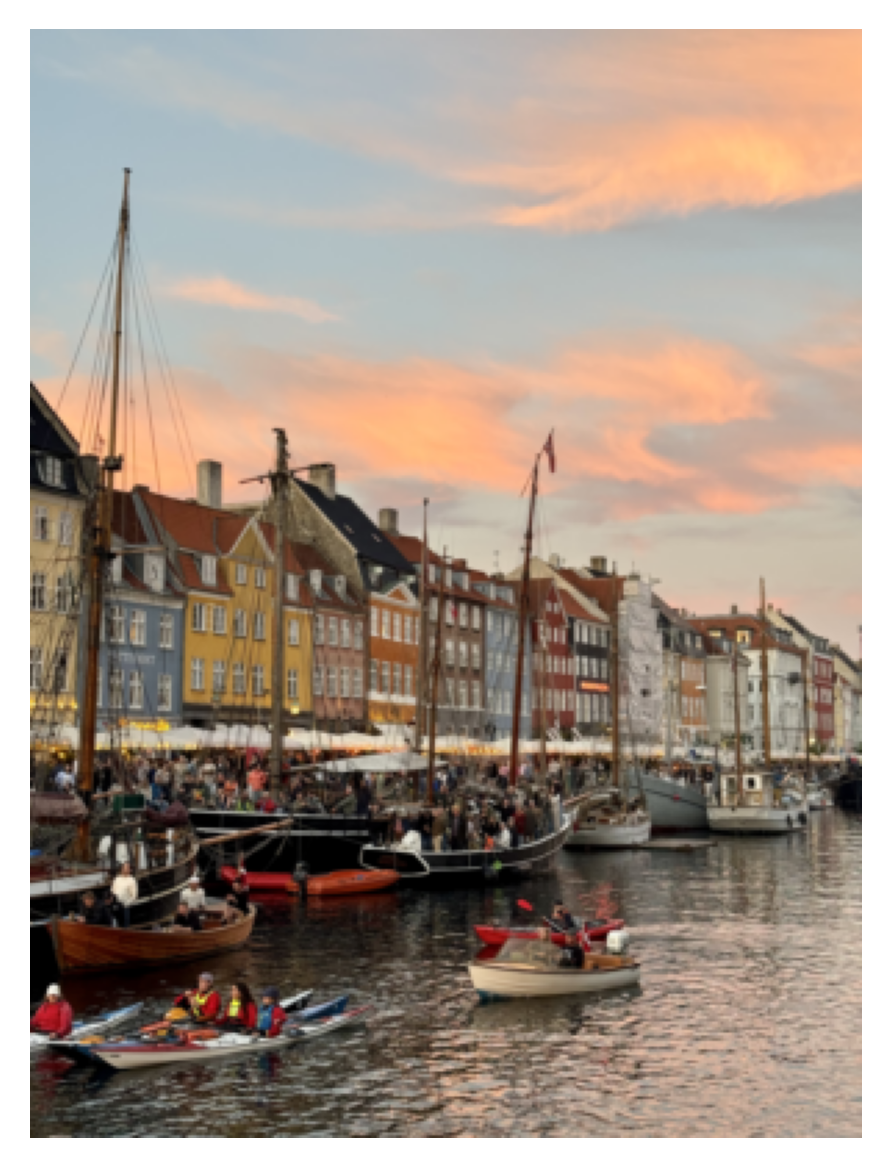}
  \end{subfigure}
  \hfill
  \begin{subfigure}[t]{0.24\textwidth}
    \includegraphics[width=\linewidth]{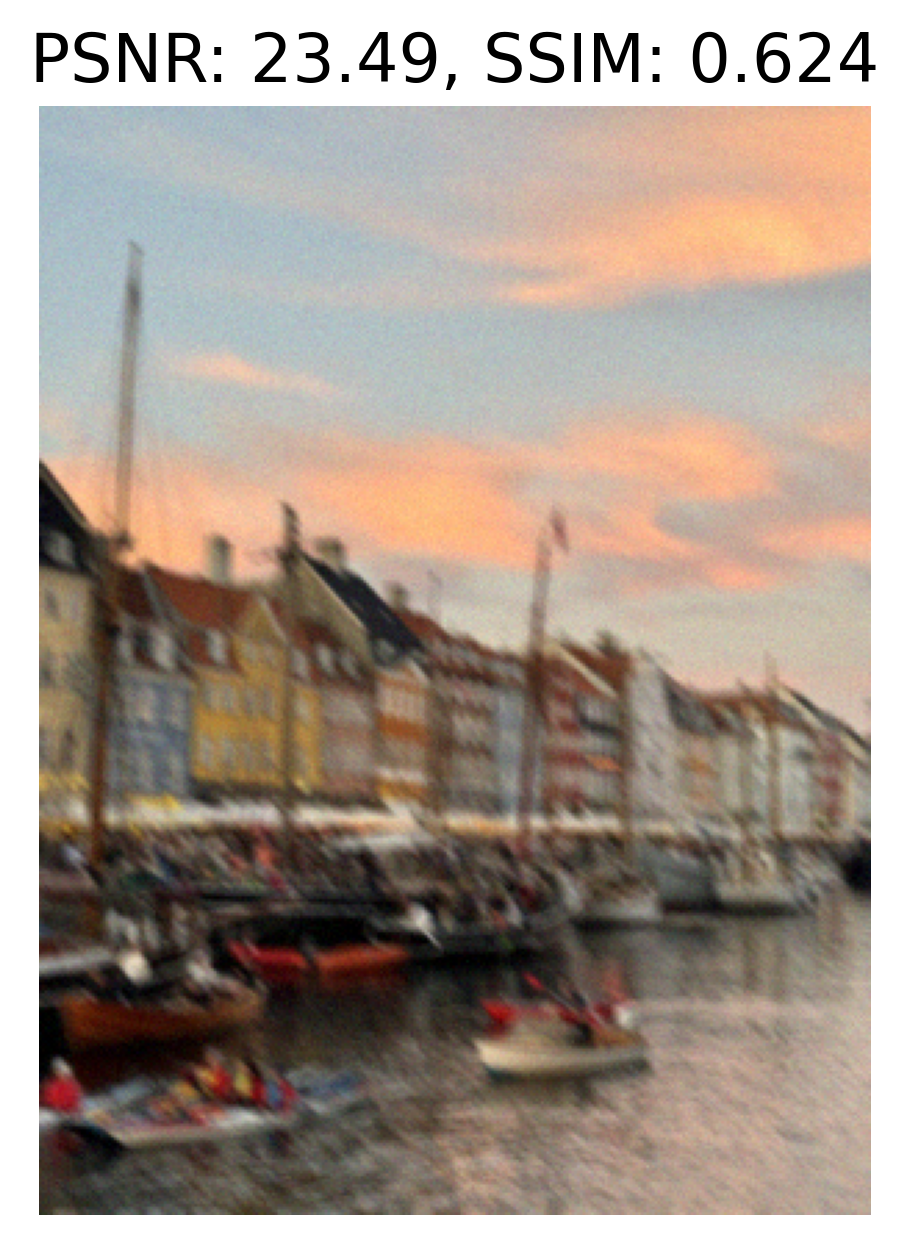}
  \end{subfigure}
  \hfill
  \begin{subfigure}[t]{0.24\textwidth}
    \includegraphics[width=\linewidth]{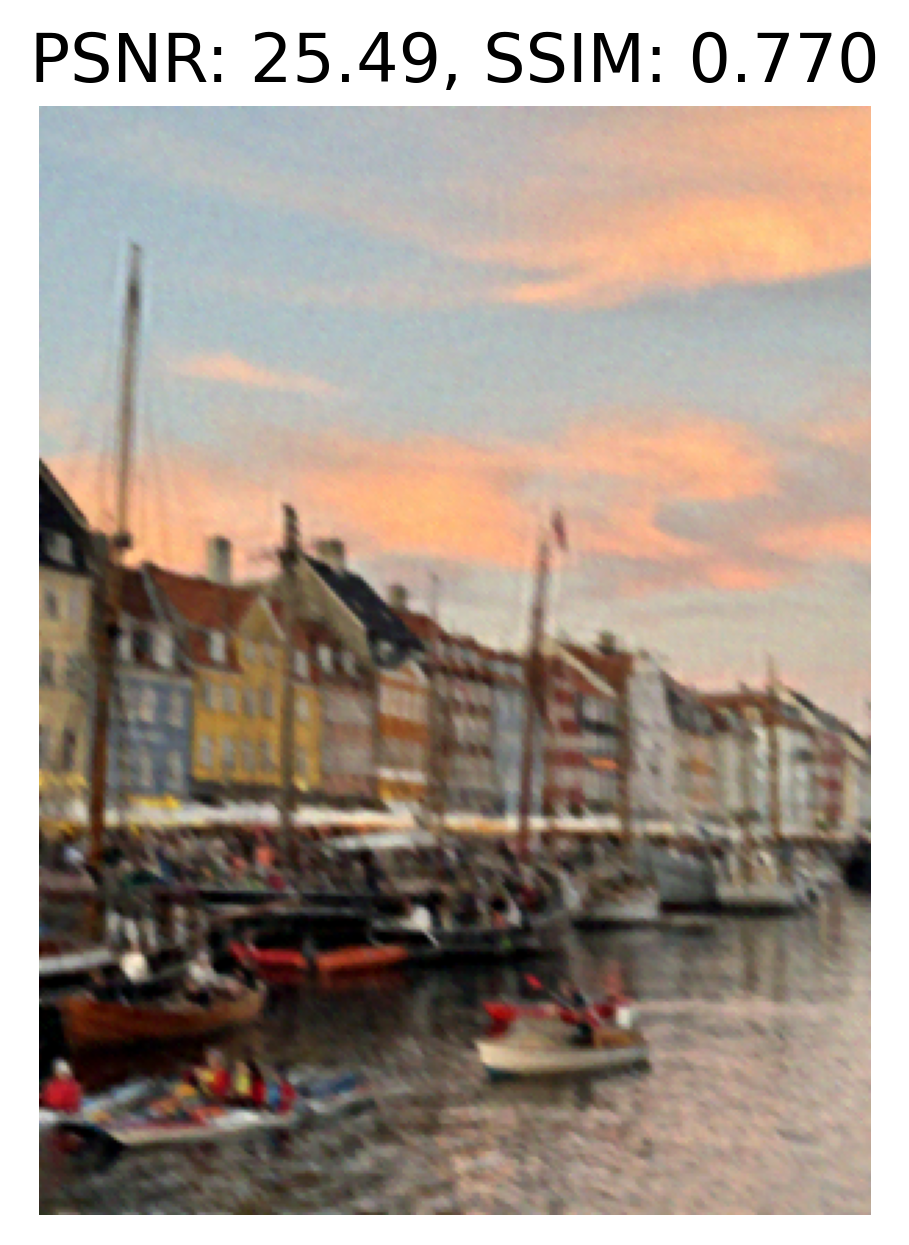}
  \end{subfigure}
  \hfill
  \begin{subfigure}[t]{0.24\textwidth}
    \includegraphics[width=\linewidth]{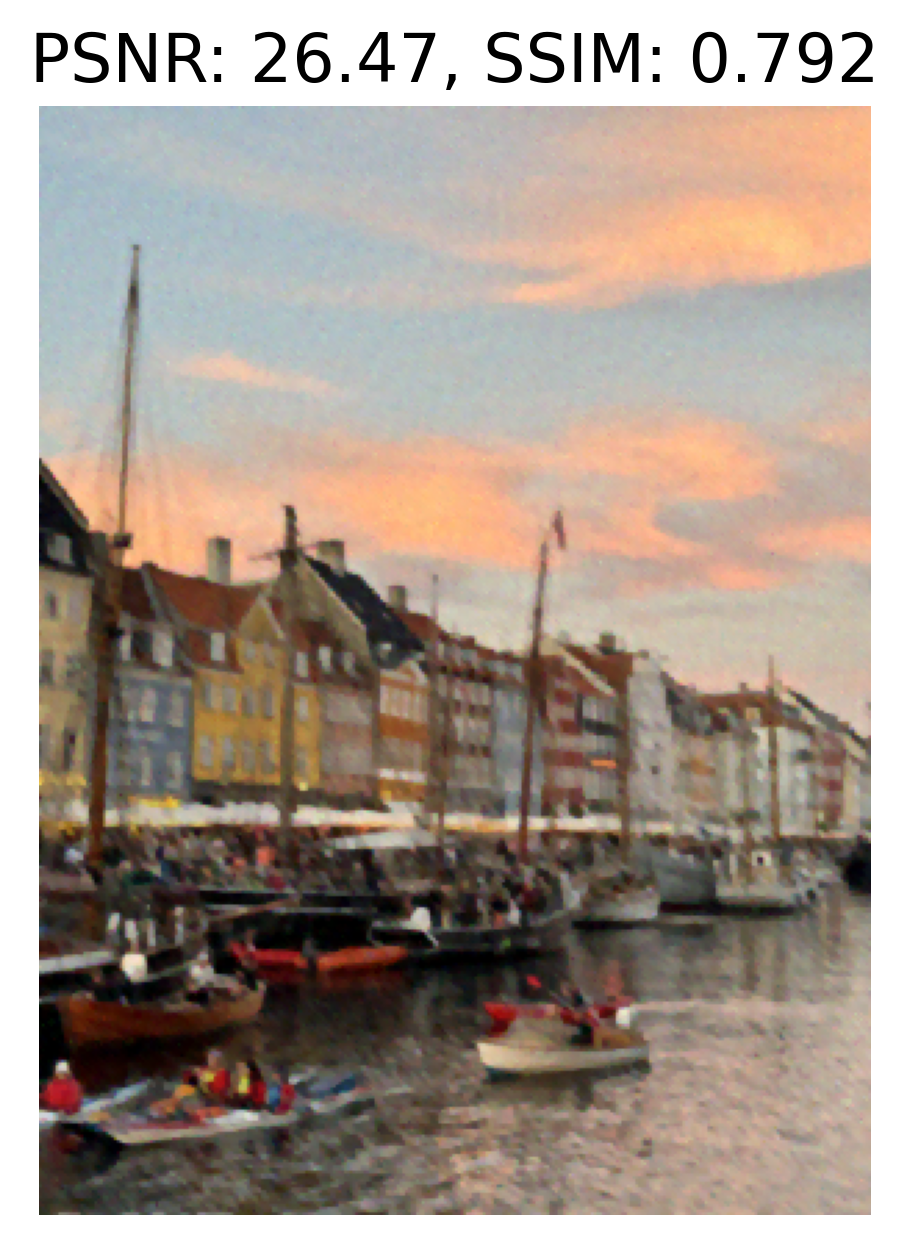}
  \end{subfigure}
  \vspace{1pt}  

  \begin{subfigure}[t]{0.24\textwidth}
    \includegraphics[width=\linewidth]{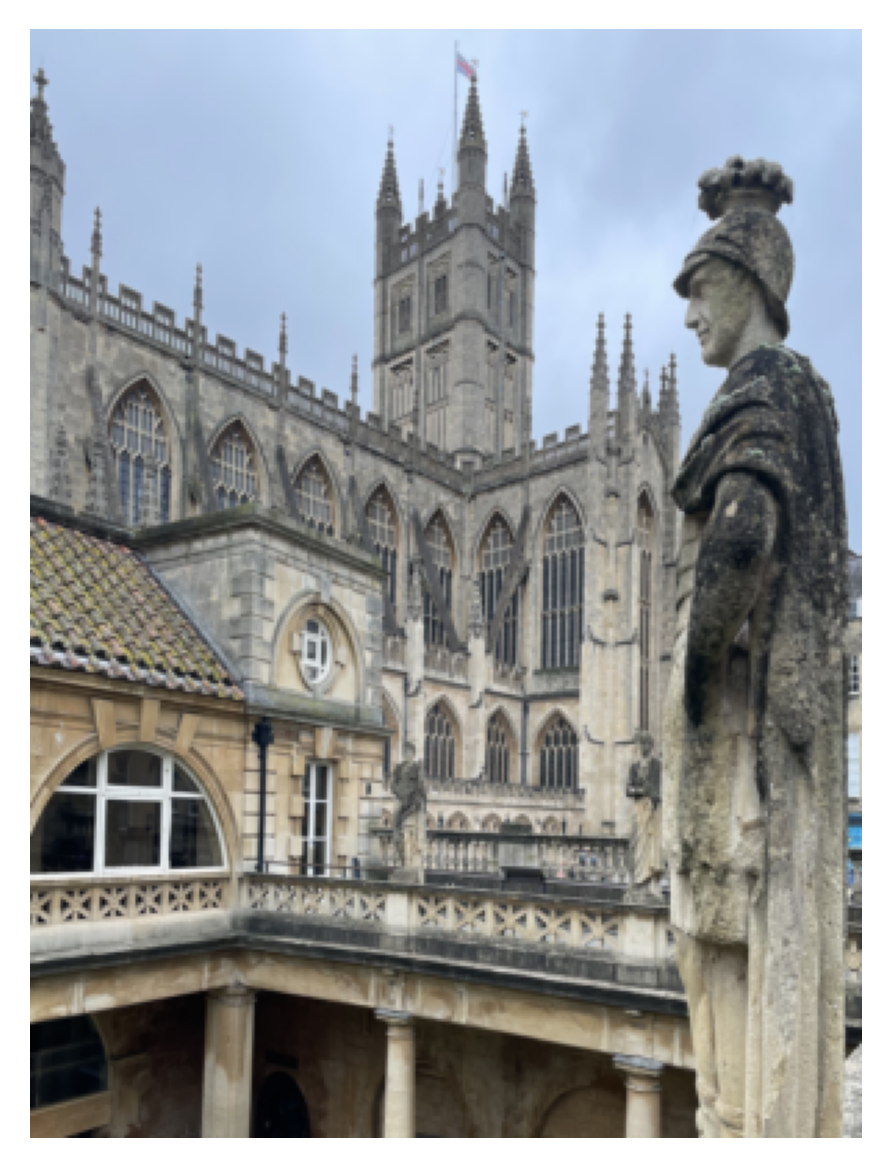}
  \end{subfigure}
  \hfill
  \begin{subfigure}[t]{0.24\textwidth}
    \includegraphics[width=\linewidth]{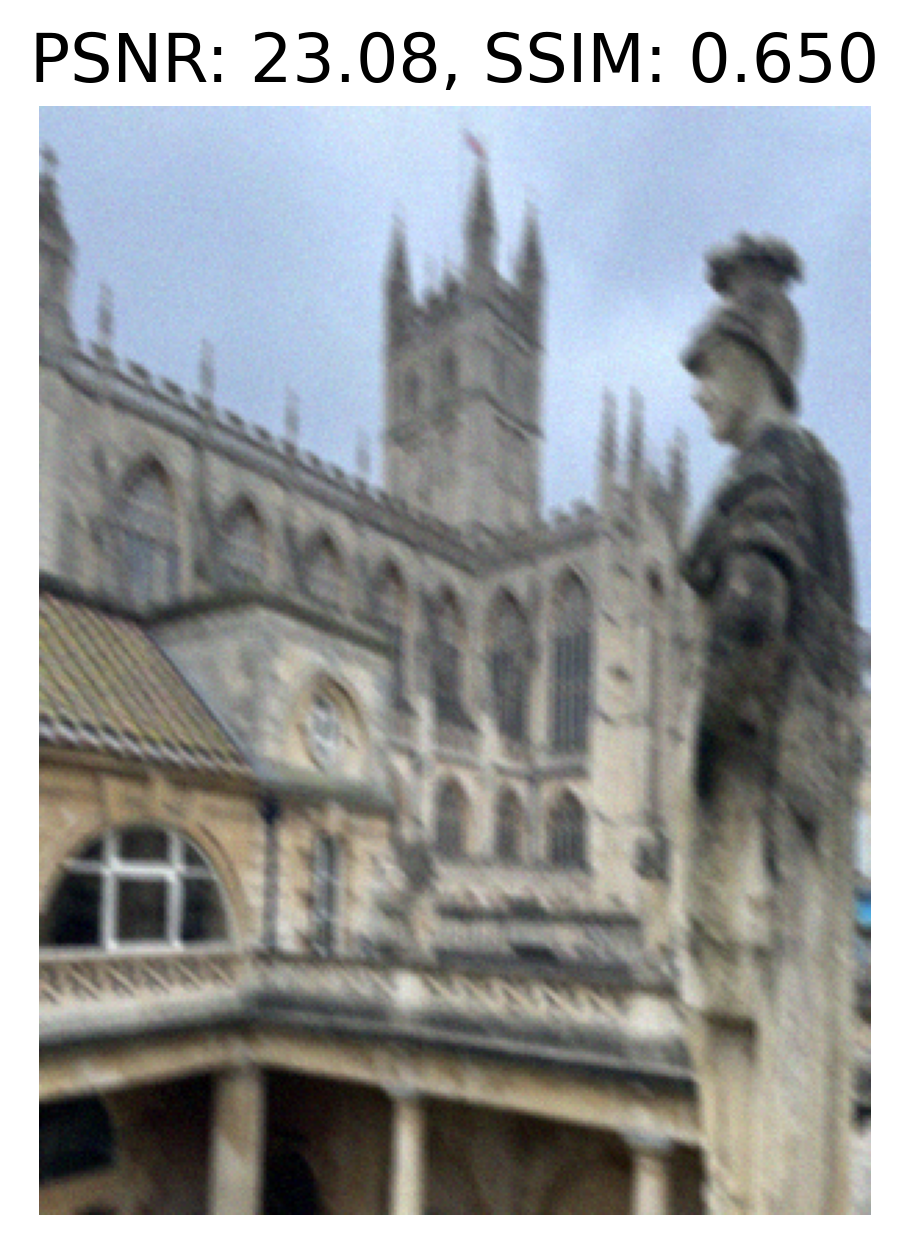}
  \end{subfigure}
  \hfill
  \begin{subfigure}[t]{0.24\textwidth}
    \includegraphics[width=\linewidth]{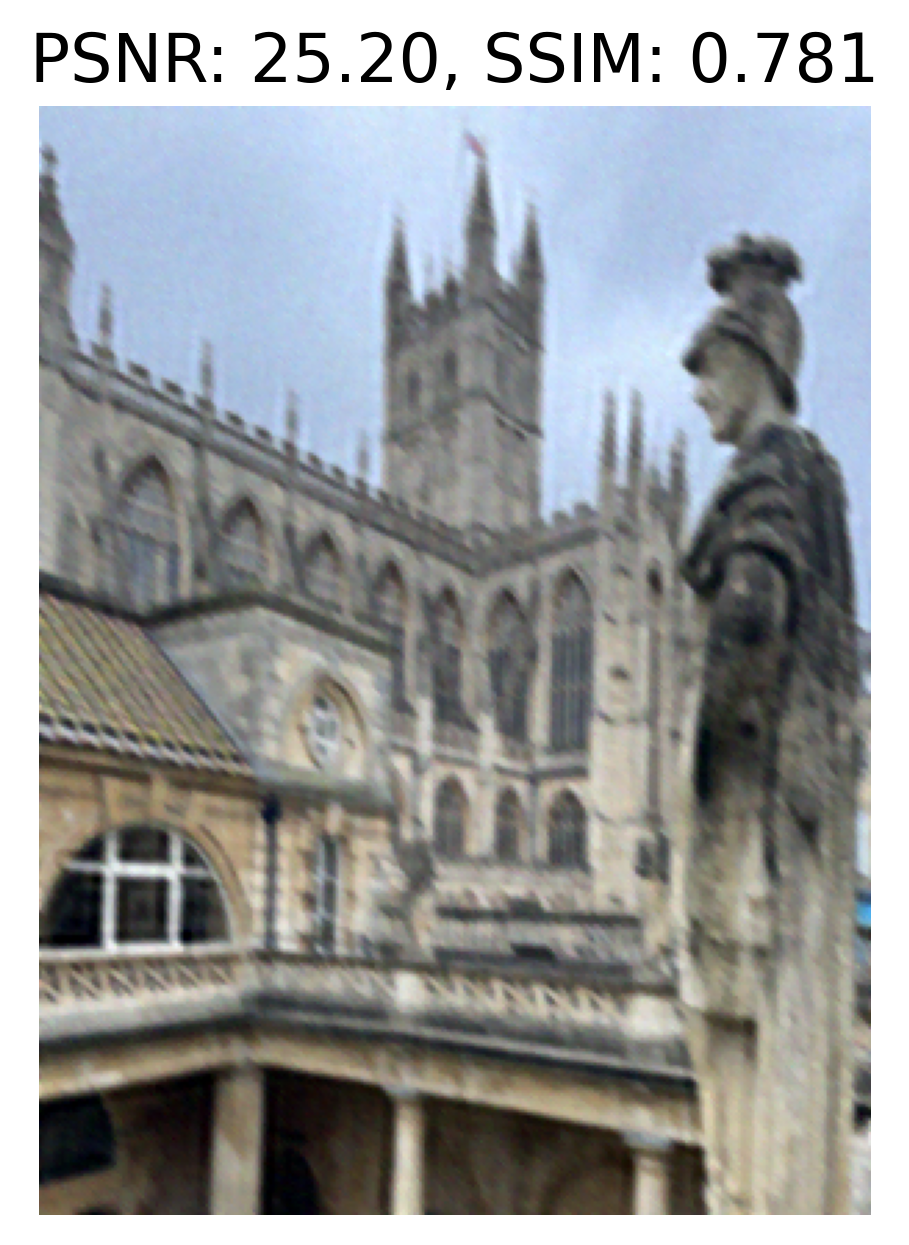}
  \end{subfigure}
  \hfill
  \begin{subfigure}[t]{0.24\textwidth}
    \includegraphics[width=\linewidth]{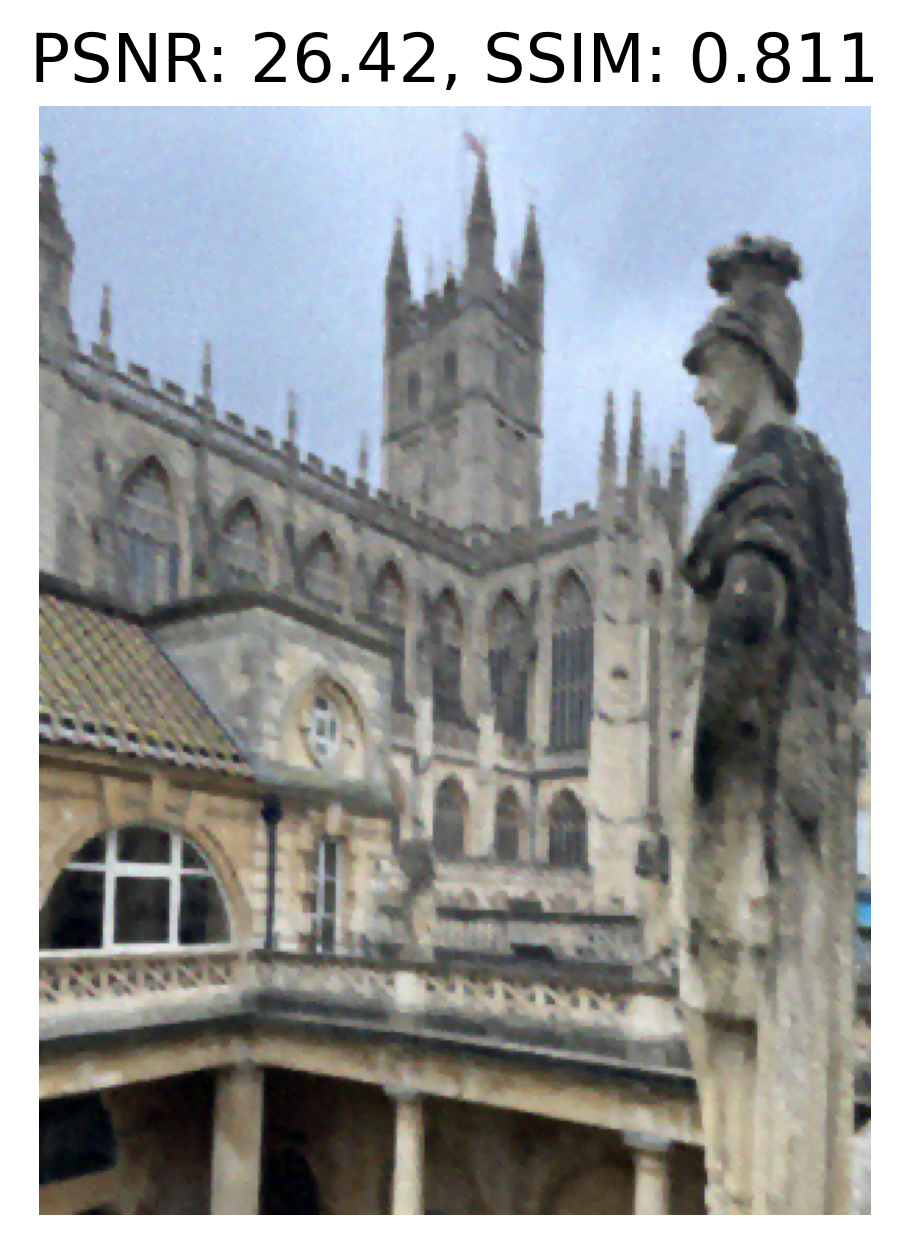}
  \end{subfigure}
  \vspace{1pt}  
  \begin{subfigure}[t]{0.24\textwidth}
    \includegraphics[width=\linewidth]{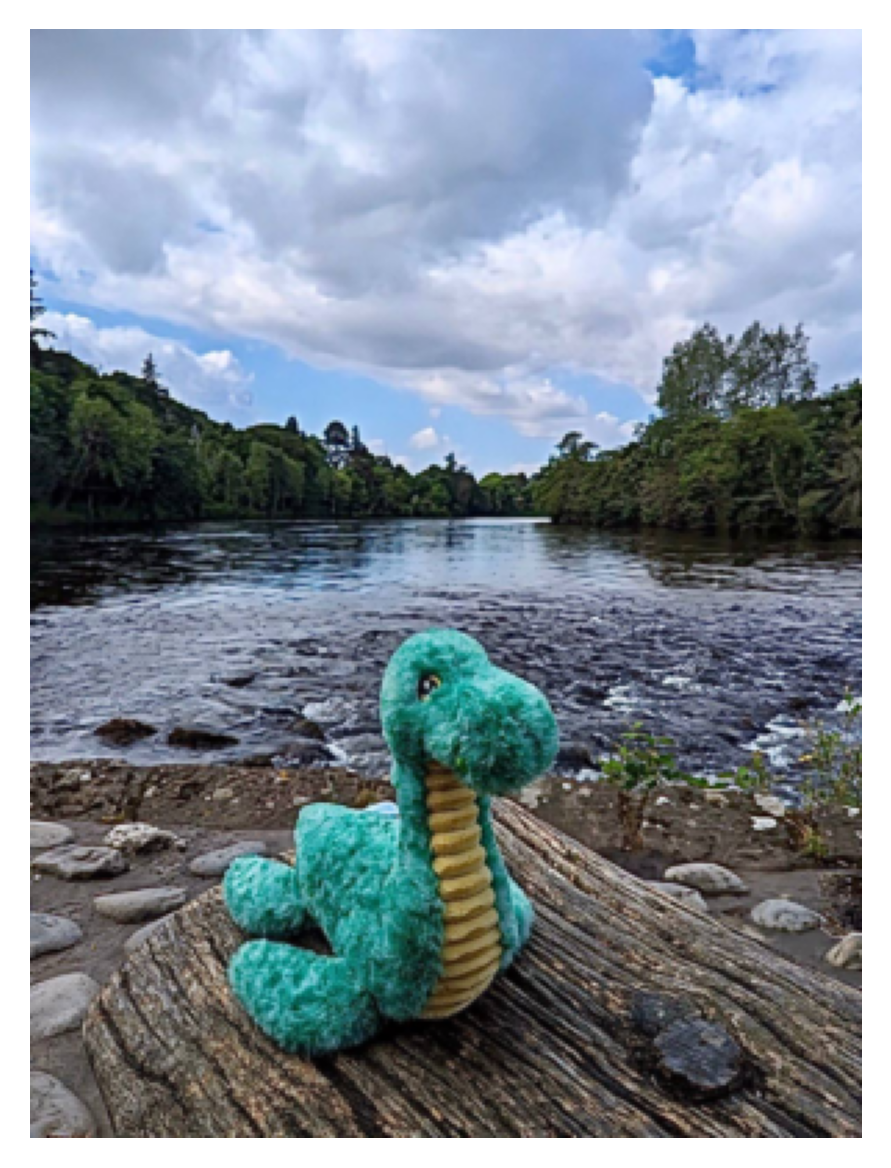}
    \caption{Ground truth}
  \end{subfigure}
  \hfill
  \begin{subfigure}[t]{0.24\textwidth}
    \includegraphics[width=\linewidth]{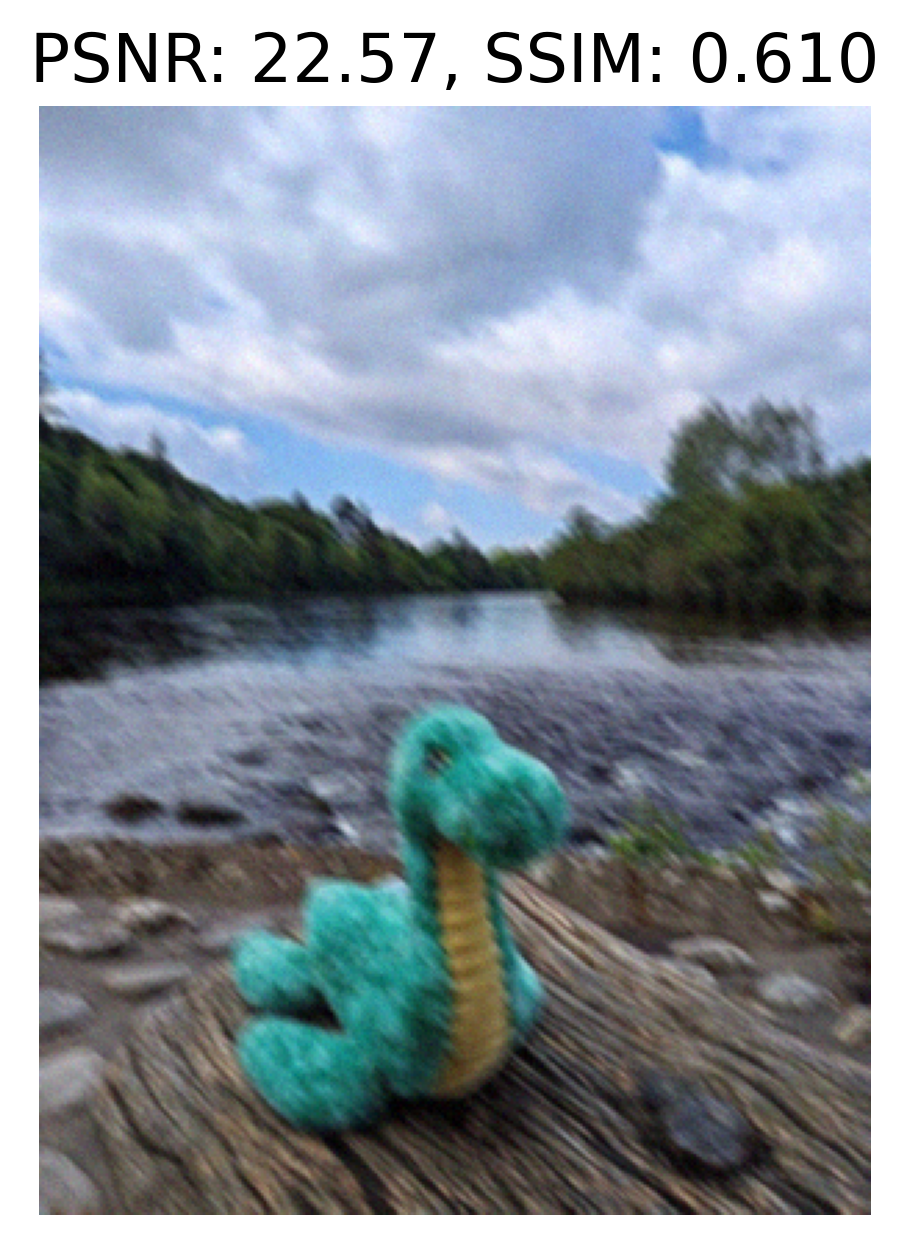}
    \caption{Blurry and noisy}
  \end{subfigure}
  \hfill
  \begin{subfigure}[t]{0.24\textwidth}
    \includegraphics[width=\linewidth]{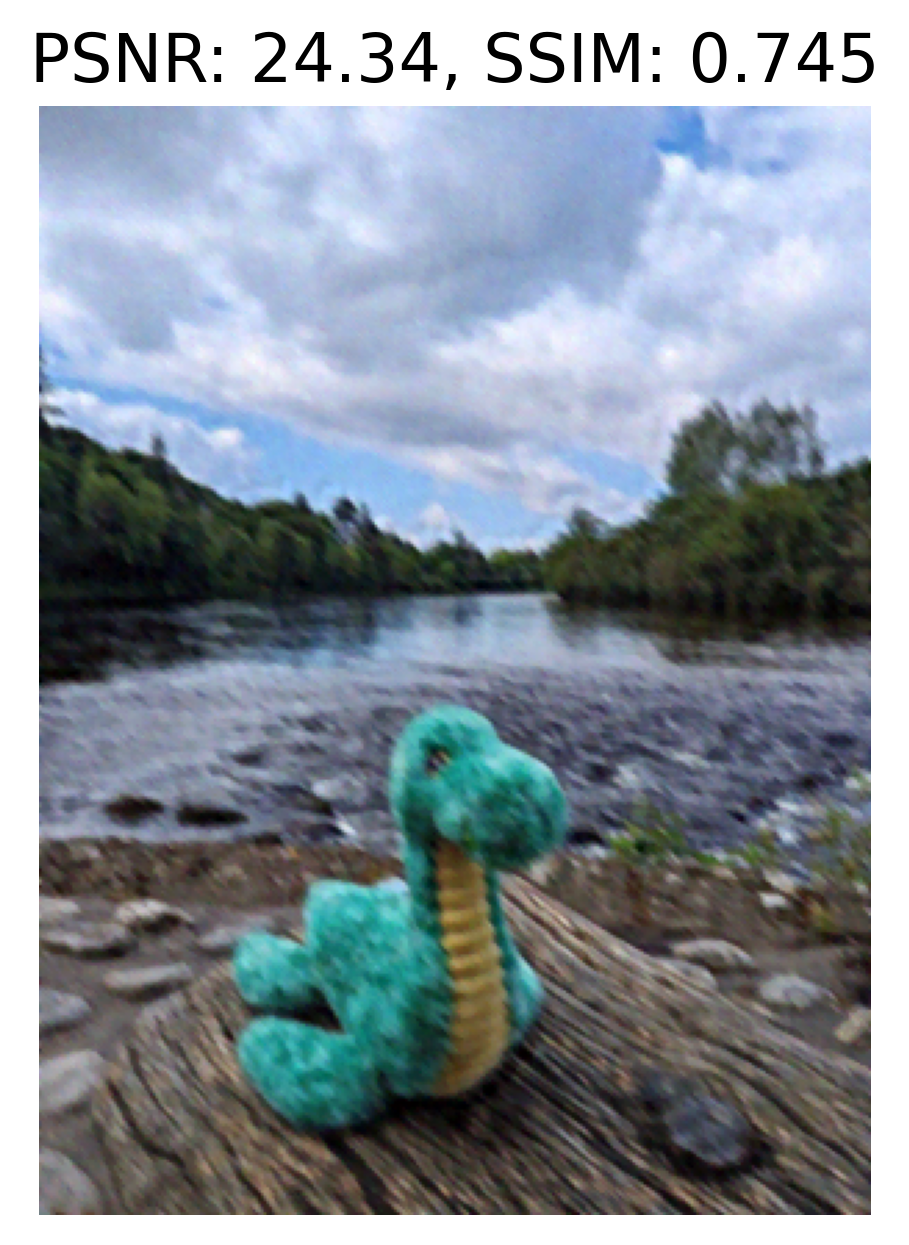}
    \caption{TV}
  \end{subfigure}
  \hfill
  \begin{subfigure}[t]{0.24\textwidth}
    \includegraphics[width=\linewidth]{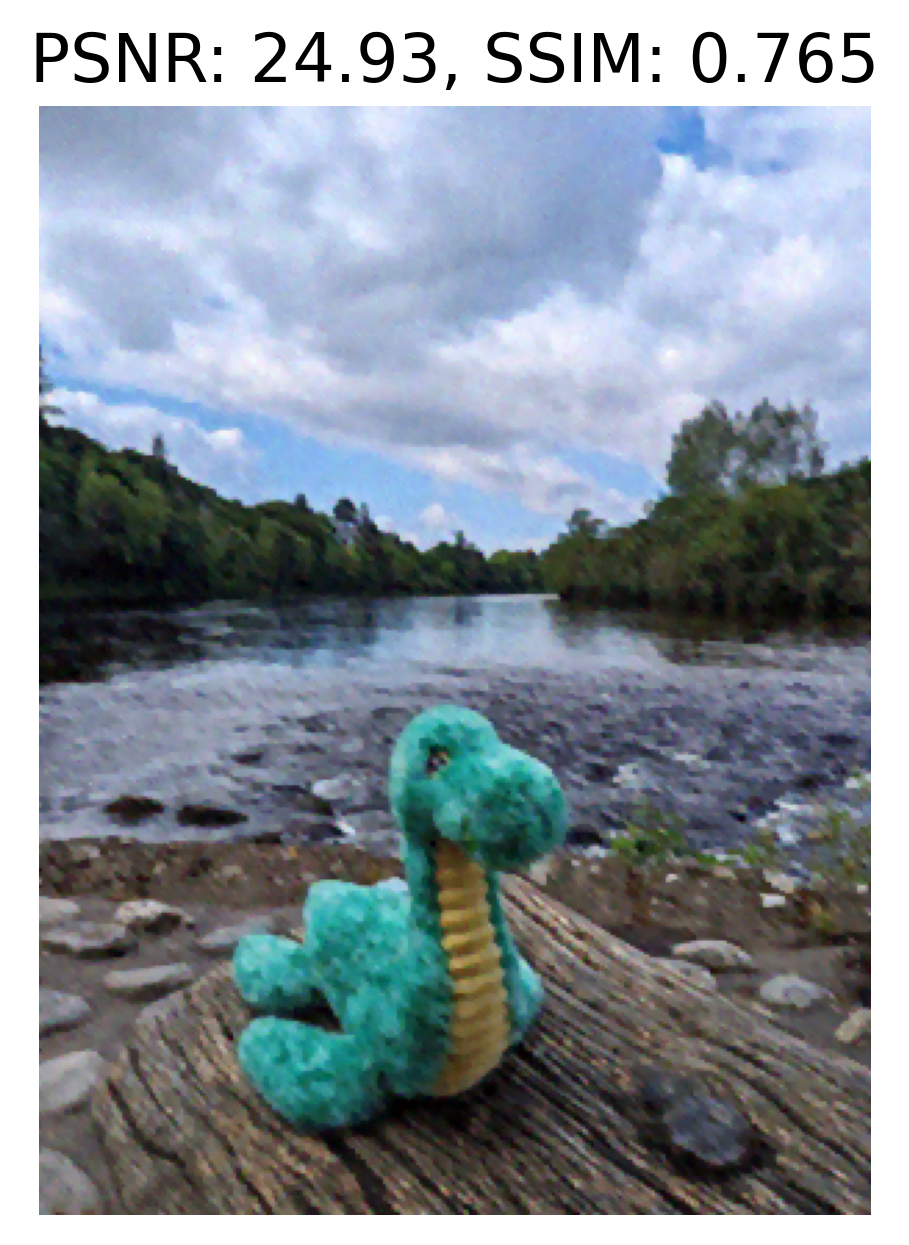}
    \caption{\Our{}}
  \end{subfigure}

  \caption{Reconstructions in semi-blind deblurring (initial kernel: motion blur).}
  \label{fig:all_mg}
\end{figure}

The deblurred image for this mixed blur is presented in \cref{fig:all_mg}, where the last column demonstrates that \Our{}  achieves higher reconstruction quality, both visually and in terms of
quality metrics, compared to  smoothed TV. Examining the changes in the (first channel of the) kernel (reported in {\cref{fig:kernel_semiblind}} and corresponding to the images in \cref{fig:all_mg}), reveals a Gaussian-like difference that \Our{} was able to recover successfully.

\begin{figure}[h!]
\centering
\begin{tabular}{@{}c@{\quad}c@{\quad}c@{}}
\includegraphics[width=0.3\textwidth]{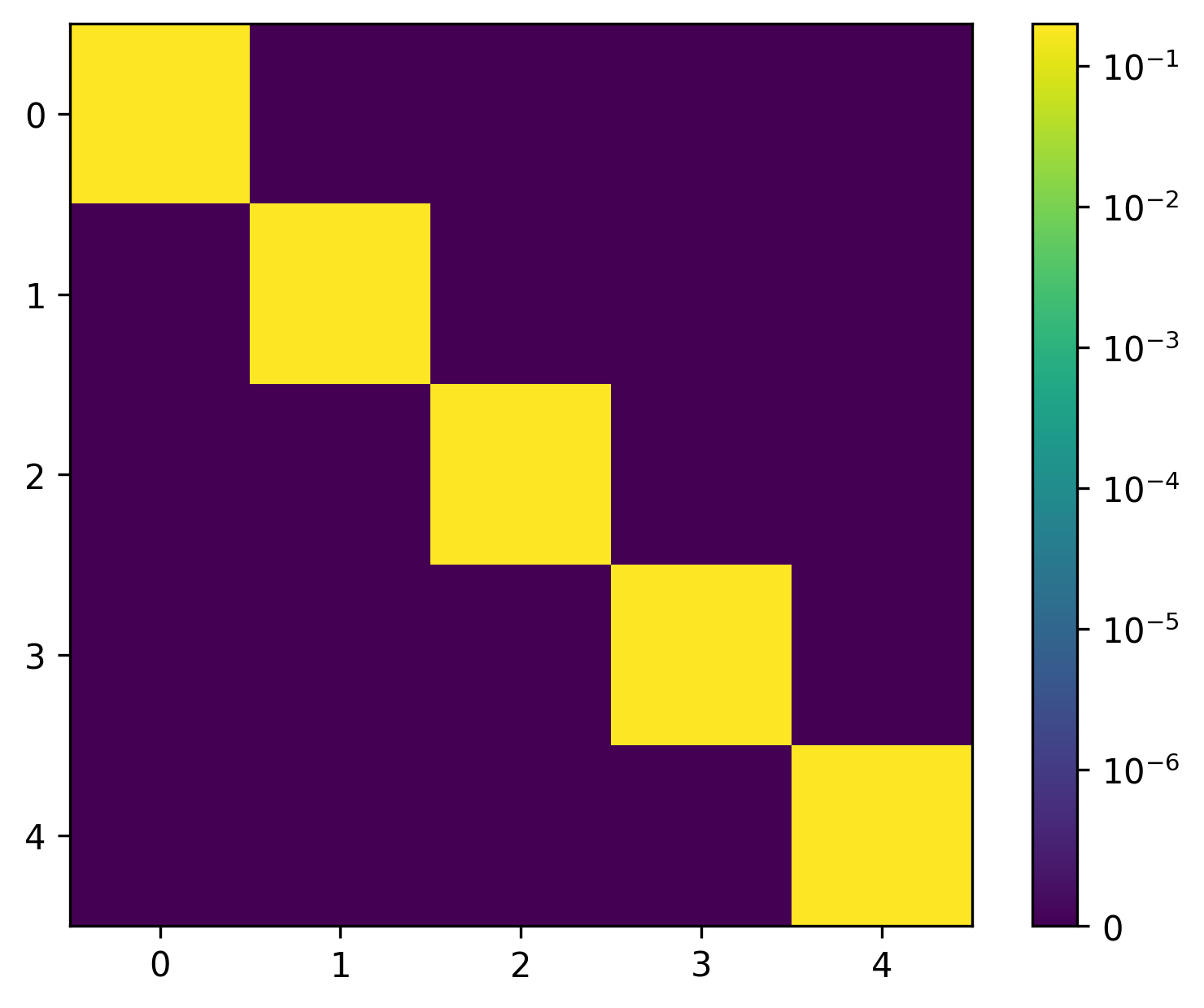}
& \includegraphics[width=0.3\textwidth]{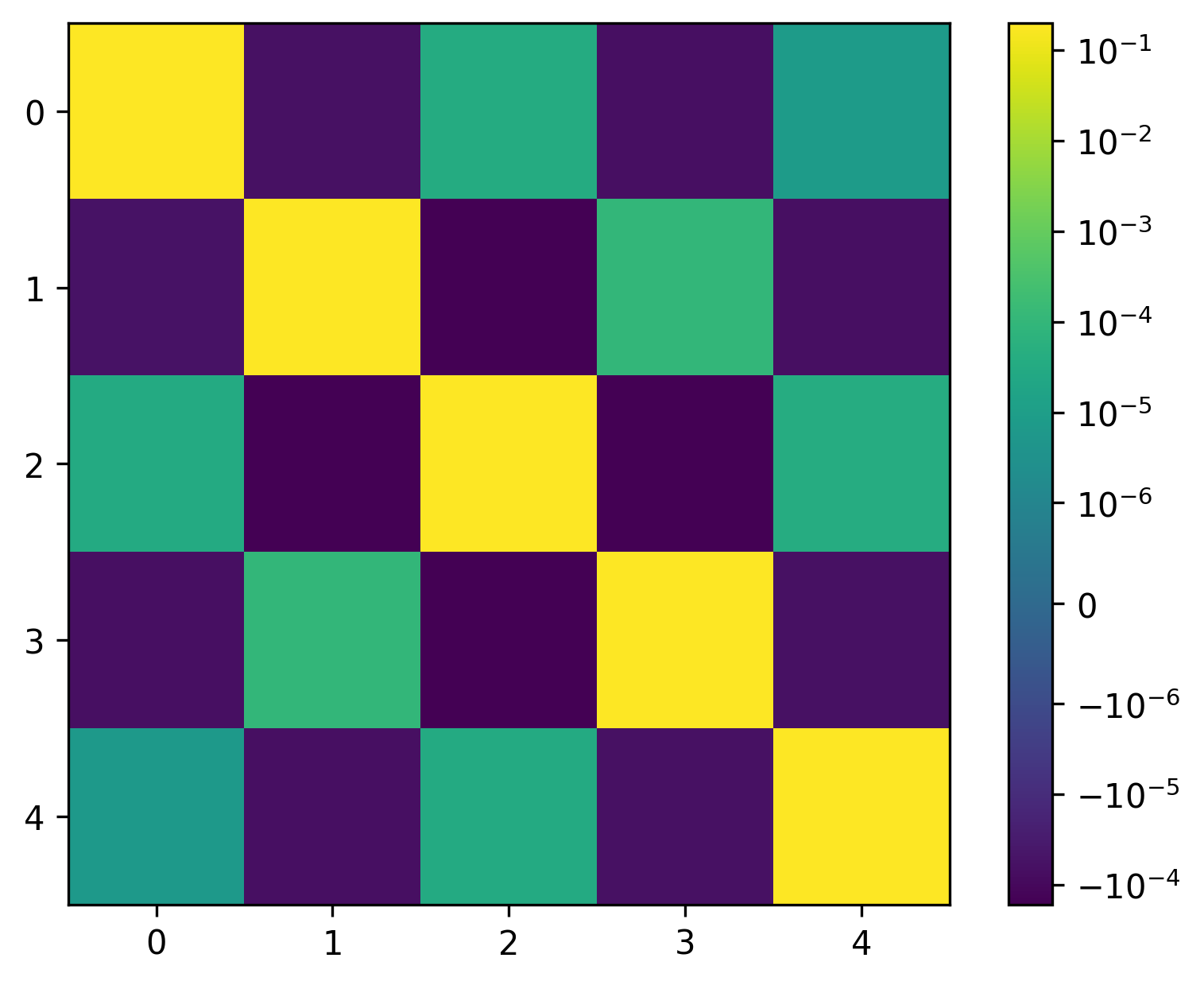} 
& \includegraphics[width=0.3\textwidth]{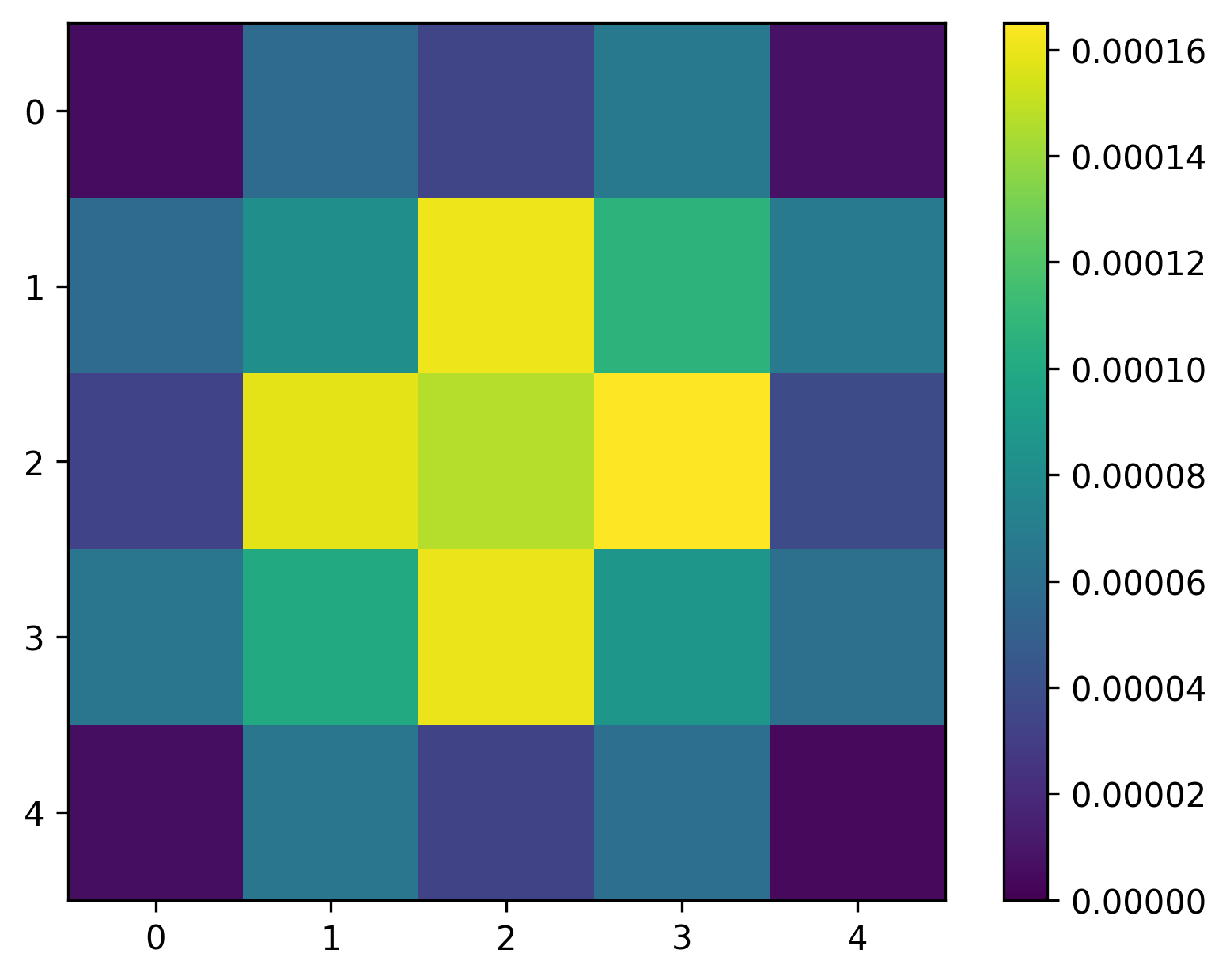} \\
\footnotesize{(a) Initial kernel $b_0$}  & 
\footnotesize{(b) Kernel $b^*$} &
\footnotesize{(c) $| b^* - b_0|$}
\end{tabular}
\caption{Kernels (first channel) of the 2D mixed blur forward operator. 
(a) Initial kernel $b_0$. (b) Optimal kernel $b^*$ recovered by \Our{}. (c) Difference $|b^*-b_0|$.}
    \label{fig:kernel_semiblind}
\end{figure}

\vspace{-5pt}

\section{Conclusions}
\label{sec:conclusions}
In this paper we discussed the applicability of an adaptive inexact bilevel optimization method (MAID) to the Sobolev-regularized analytical deep image prior problem. We have seen that  MAID  admits a straightforward generalization to an infinite-dimensional Hilbert-space setting, yet the non-differenti\-ability of the Sobolev regularization term in the $L^2$ setting requires further development of the MAID algorithm. In particular, an adaptive inexact version of the PALM algorithm seems desirable, which we plan to investigate in our future work. In the finite-dimensional setting, however, we demonstrate that  adaptive inexact bilevel optimization can achieve significant computational speed-ups and allows us to use ADP in larger-scale problems than in pervious literature.

\subsection*{Acknowledgements} 
This research made use of Hex, the GPU Cloud in the Department of Computer Science at the University of Bath.
MSS is supported by a scholarship from the EPSRC Centre for Doctoral Training in Statistical Applied Mathematics at Bath (SAMBa) under the project EP/S022945/1. 
TAB is a member of INdAM-GNCS and acknowledges support by the European Union - NextGeneration EU through the Italian Ministry of University and Research as part of the PNRR – M4C2, Investment 1.3 (MUR Directorial Decree no. 341 of 03/15/2022), FAIR ``Future Partnership Artificial Intelligence Research'', Proposal Code PE00000013 - CUP J33C22002830006).
YK acknowledges the support of the EPSRC (Fellowship EP/V003615/2 and Programme Grant EP/V026259/1).
%
% ---- Bibliography ----

\bibliographystyle{abbrv}
\bibliography{fullbib}

\end{document}